\newcommand{\ds}{\displaystyle}
\newcommand{\pa}{\partial}
\newcommand{\la}{\lambda}
\newcommand{\va}{\varepsilon}
\newcommand{\dis}{\displaystyle}
\theoremstyle{plain}
\newtheorem{theorem}{Theorem}[section]
\newtheorem{lemma}{Lemma}[section]
\numberwithin{equation}{section}
\newcommand{\n}{{\mathcal  N}}
\newcommand{\R}{{\mathbb R}}
\newcommand{\N}{{\mathbb N}}
\begin{document}
	\title{A Global Compact Result for a Fractional Elliptic Problem with  Hardy Term and  Critical Sobolev Non-linearity on the Whole Space
		}
	\author{ Lingyu Jin}
	
	\date{}
	\maketitle
	\begin{abstract}
		In this paper, we deal with a fractional elliptic equation with critical Sobolev nonlinearity and Hardy term
		$$\begin{cases}
		(-\Delta)^{\alpha} u-\mu\frac{u}{|x|^{2\alpha}}+a(x) u=|u|^{2^*-2}u+k(x)|u|^{q-2}u,\ \
		\,\\
		u\,\in\,H^\alpha(\R^N),
		\end{cases}
		\eqno {(*)} $$
		where   $2<q< 2^*$, $0<\alpha<1$, $N>4\alpha$,
		$2^*=2N/(N-2\alpha)$ is the critical Sobolev exponent, $a(x),k(x)\in C(\R^N)$. Through a compactness
		analysis of the functional associated to $(*)$, we
		obtain the existence of positive solutions for $(*)$ under
		certain assumptions on $a(x),k(x)$.
	\end{abstract}
	
	\bigbreak
	{\bf Key words}. \ \ Fractional Laplacian,  compactness, positive solution, unbounded domain, Hardy term, critical Sobolev nonlinearity.
	\bigbreak
	{AMS Classification:} 35J10 35J20 35J60
	\bigbreak

	\section{Introduction}
	We consider the following nonlinear elliptic equation:
	\begin{equation}\label{1.1}
	\begin{cases}
	(-\Delta)^{\alpha} u-\mu\frac{u}{|x|^{2\alpha}}+a(x) u=\dis{|u|^{2^*-2}u}+k(x)|u|^{q-2}u, \,x\in \R^N,\ \
	\,\\
	u\,\in\,H^\alpha(\R^N),
	\end{cases}
	\end{equation}
	where  $2<q <2^*$, $0<\alpha<1$, $N>4\alpha$,  $2^*=2N/(N-2\alpha)$ is the critical Sobolev exponent,  $a(x),k(x)\in C(\R^N)$.
	
Recently the fractional Laplacian and more general nonlocal operators of elliptic type have been widely studied, both for their interesting theoretical structure and concrete applications in many fields such as optimization, finance, phase transitions, stratified materials, anomalous diffusion and so on (see \cite{BV,DMP,FB,FQ,NPV,SV,SV1,SZY}). In particular, a lot of  results have been accumulated for elliptic equations with critical nonlinearity related to (1.1). In \cite{DMP}, Dipierro etc. considere'd the critical problem with Hardy-Leray potential
	\begin{equation}\label{1.1j}
	\begin{cases}
	(-\Delta)^{\alpha} u-\mu \frac{u}{|x|^{2\alpha}}=\dis|u|^{2^*-2}u, \,&x\in \R^N,\ \
	\,\\
	u\in \dot{H}^\alpha(\R^N).
	\end{cases}
	\end{equation}
	where  $\dot{H}^\alpha (\R^N)$ is defined in (\ref{1.5jj}).
	They  proved the existence, certain qualitative properties and asymptotic behavior of positive solutions to (\ref{1.1j}). Ghoussoub  and Shakerian in \cite{GS} investigated the following double critical problem in $\R^N$
	\begin{equation}\label{1.1*}
	\begin{cases}
	(-\Delta)^{\alpha} u-\mu \frac{u}{|x|^{2\alpha}}=\dis\frac{|u|^{2^*-2}u}{|x|^s}+|u|^{2^*-2}u, \,&x\in \R^N,\ \
	\,\\
	u>0,\, u\in \dot{H}^\alpha(\R^N),
	\end{cases}
	\end{equation}
 with $\mu>0,0<s<2$.
 Through the non-compactness analysis of the Palais-Smale sequence of (\ref{1.1*}),  the existence of the solutions were obtained.
  The authors in \cite{JD} established  a concentration-compactness result for a fractional Schr\"{o}dinger equation  with the subcritical nonlinearity $f(x,u)$. Motivated by \cite{DMP,GS,JD,JF,WY} we consider the existence of  positive solutions for problem (\ref{1.1}) in $\R^N$.    The main interest for this type of problems, in addition to the nonlocal fractional Laplacian is the presence of the singular potential $\frac{1}{|x|^{2\alpha}}$ related to the fractional Hardy's inequality. We recall the Hardy inequality(\cite{DMP}),
 	  \begin{equation}\label{1.4j}
 	  \Gamma_{N,\alpha}\big (\int_{\R^N}\frac{|u(x)|^{2}}{|x|^{2\alpha}}dx\bigl )\leq  c_{N,\alpha}\int_{\R^N}\frac{|u(x)-u(y)|^2}{|x-y|^{N+2\alpha}}dy, \forall  u\in C_0^\infty (\R^N),
 	  \end{equation}
 	  where \begin{equation}
 	  \label{1.8j}\Gamma_{N,\alpha}=2^{2\alpha}\frac{\Gamma^2(\frac{N+2\alpha}{4})}{\Gamma^2(\frac{N-2\alpha}{4})}, c_{N,\alpha}=2^{2\alpha-1}\pi^{-\frac{N}{2}}\frac{\Gamma(\frac{N+2\alpha}{2})}{|\Gamma(-\alpha)|}.\end{equation}
 	 The Sobolev embedding $\dot{H}^\alpha(\R^N) \hookrightarrow L^{2}(|x|^{-2\alpha}, \R^N)$ is not compact, even locally, in any neighborhood of zero.  As it is well known, the loss of the compactness of the  embeddings  is one of the  main difficulties for elliptic problems with critical nonlinearities. Problem (1.1) has three factors, critical Sobolev term, Hardy term and  unbounded domain which lead to the non-compactness of the embeddings. In \cite{DMP} and \cite{GS}, the authors can consider the solutions of critical problems in the homogeneous fractional Sobolev space $\dot{H}^\alpha(\R^N)$, while we must deal with (1.1) in the nonhomogeneous fractional Sobolev space $H^\alpha(\R^N)$ given the presence of low sub-critical terms in (\ref{1.1}). This is why the methods in \cite{DMP} and \cite{GS} can not be used directly to (\ref{1.1}). As far as we know, the existence results  for the fractional elliptic problems with a mixture of critical Sobolev terms, Hardy term and subcritical terms are relatively new. To overcome the difficulties caused by the lack of compactness, we carry out a non-compactness analysis which can distinctly express all the parts which cause non-compactness. As  a result, we are able to obtain the existence of nontrival solutions of the elliptic problem with the critical nonlinear term on an unbounded domain  by getting rid of these noncompact factors.
	To be more specific, for the Palais-Smale sequences of the variational functional
	corresponding to (\ref {1.1}) we first establish a  complete noncompact expression  which includes all the blowing up
	bubbles caused by the critical Sobolev nonlinearity, the Hardy term and by the unbounded
	domain. Then we derive the existence  of positive solutions for  (\ref {1.1}).  Our methods are based on   some techniques of \cite{DJP,JD,L1,PP,Sm,S2,ZC,Y}.

	Before introducing our main results, we give  some
	notations and assumptions.
	
	{\bf Notations and assumptions:}
	
	 Denote $c$ and $C$ as
	 arbitrary constants which may change from line to line. Let $B(x,r)$ denote a ball centered at $x$ with
	 radius $r$ and $B(x,r)^C=\R^N\setminus B(x,r)$.

	Let $N\geq 1$, $u\in L^2(\R^N)$, let the Fourier transform of $u$ be
	$$\widehat{u}(\xi)=\frac{1}{(2\pi)^{\frac{N}{2}}}\int_{\R^N}e^{-i\xi\cdot x}u(x)dx.$$
		We define the operator $(-\Delta)^\alpha u$ by the Fourier transform $$\widehat{(-\Delta)^{\alpha} u}(\xi)=|\xi|^{2\alpha}\hat{u}(\xi),\ \ \forall u\in C^\infty _0(\R^N).$$
	Let $\dot{H}^\alpha(\R^N)$ be the homogeneous fractional Sobolev space  as the completion of $C^\infty _0(\R^N)$ under the norm
	\begin{equation}\label{1.5jj}
	\|u\|_{\dot{H}^\alpha(\R^N)}=\||\xi|^\alpha\widehat{u}\|_{L^2(\R^N)},
	\end{equation}
	and  denote by $H^\alpha(\R^N)$ the usual nonhomogeneous fractional Sobolev space with the norm
	\begin{equation}\label{1.6}
	\|u\|_{H^\alpha(\R^N)}=\|u\|_{L^2(\R^N)}+\||\xi|^\alpha\widehat{u}\|_{L^2(\R^N)}.
	\end{equation}
 For $0<\alpha<1$, a direct calculation  (see e.g. [\cite{NPV}, proposition 4.4] or [\cite{DMP}, Proposition 1.2]) gives
$$c_{N,\alpha}\int_{\R^N}\int_{ \R^N} \frac{|u(x)-u(y)|^2}{|x-y|^{N+2\alpha}}dxdy=\int_{\R^N}|(-\Delta )^{\alpha/2}u(x)|^2dx=\|u\|_{\dot{H}^\alpha(\R^N)}^2,$$
where $c_{N,\alpha}=2^{2\alpha-1}\pi^{-\frac{N}{2}}\frac{\Gamma(\frac{N+2\alpha}{2})}{|\Gamma(-\alpha)|}$.

Let $u^+=\max \{u,0\}, u^-=u^+-u$. From the proof of (\ref{2.10}) in \cite{PAJ}, it follows
\begin{equation}
\|u^+\|_{\dot {H}^\alpha}\leq \|u\|_{\dot {H}^\alpha}.
\end{equation}

We call $u\not\equiv 0$ in $\R^N$ if the measure of the set $\{x\in \R^N|u(x)\not =0\}
$ is positive.

Recall the definition of Morrey space. A measurable function $u:\R^N \rightarrow \R$ belongs to the Morrey space  with $p\in [1,\infty)$ and $\nu \in (0,N]$, if and only if
\[ \|u\|^p_{L^{p,\nu}(\R^N)}=\sup_{r>0,\bar x\in \R^N}r^{\nu-N}\int_{B(\bar x,r)}|u(x)|^pdx<\infty. \]
By H\"older inequality, we can verify  (refer to \cite{NPV})
 \begin{equation}
 L^{2^*}(\R^N)\hookrightarrow L^{p,\nu}(\R^N),  \text{ for } 1\leq p<2^*,
 \end{equation}
and
 \begin{equation}
L^{p,\frac{(N-2\alpha)p}{2}}(\R^N)\hookrightarrow L^{p_1,\frac{(N-2\alpha)p_1}{2}}(\R^N),  \text{ for } 1<p_1<p<2^*.
 \end{equation}
 Moreover, we have $ L^{p,\nu}(\R^N)\hookrightarrow  L^{1,\frac{\nu}{p}}(\R^N)$.

 Next we give the definition of the Palais-Smale sequence.
 Let $X$ be a Banach space, $\Phi\in C^1(X,\R)$, $c\in \R$, we call $\{u_n\} \subset X$ is a Palais-Smale sequence of $\Phi$ if
 \begin{equation}
 	\Phi(u_n)\rightarrow c,\,\,\Phi'(u_n)\rightarrow 0\text{ as }n\rightarrow \infty.
 \end{equation}
In this paper we assume that:

(a) $ a(x)\in C(\R^N)$, $ k(x)\in C(\R^N)$;

(b) $\displaystyle\lim_{|x|\rightarrow\infty} a(x)=\bar
a>0,\lim_{|x|\rightarrow\infty} k(x)=
\bar k>0,\,\,\,\inf_{x\in\R^N}a(x)={\hat
a}>0,\,\inf_{x\in\R^N}k(x)={\hat
k}>0.$

 In the
following, we assume that $a(x),k(x)$ always satisfy (a) and (b).
The energy functional associated with (1.1) is  for all $ u\in H^{\alpha}(\R^N)$,
$$\begin{aligned}I(u)&=\frac{1}{2}\int_{\R^N}\Bigl (|(-\Delta)^{\alpha/2}u(x)|^2
-\mu\frac{u^2}{|x|^{2\alpha}}+a(x)|u(x)|^2\Bigl )
dx\\& \ \ \ \ \ -\dis\frac{1}{2^*}\int_{\R^N}(u^+(x))^{2^*}dx
-\frac{1}{q}\int_{\R^N}k(x)(u^+(x))^q
dx.\end{aligned}$$ Finally we present some problems
associated to (1.1) as  follows.

\noindent The limit equation of (1.1)  involving subcritical and critical terms is
\begin{equation}\label{(2.1)}
\begin{cases}
(-\Delta)^{\alpha} u+\bar a u=\bar{k}|u|^{q-2}u+|u|^{2^*-2}u,\\
u\in H^{\alpha}(\R^N),
\end{cases}
\end{equation}
and its corresponding variational functional is
$$\begin{aligned}I^\infty(u)&=\frac{1}{2}\int_{\R^N}\Bigl (|(-\Delta)^{\alpha/2}u(x)|^2+\bar a
|u(x)|^2\Bigl
)dx-\frac{1}{q}\int_{\R^N}\bar k
(u^+(x))^qdx\\&\ \ \ -\frac{1}{2^*}\int_{\R^N}{(u^+(x))^{2^*}}dx, \,\,\ \  u\in H^{\alpha}(\R^N).\end{aligned} $$ The limit equation of (1.1)
involving the Hardy term  and critical Sobolev nonlinearity is
\begin{equation}\label{1.12}
\begin{cases}
(-\Delta)^{\alpha} u-\mu\frac{u}{|x|^{2\alpha}}=\dis{|u|^{2^*-2}u},\\
u\in \dot{H}^\alpha(\R^N),
\end{cases}
\end{equation}
and the corresponding variational functional is
$$I_\mu(u)=\frac{1}{2}\int_{\R^N} (|(-\Delta)^{\alpha/2}u(x)|^2-\mu\frac{u^2}{|x|^{2\alpha}})dx-\frac{1}{2^*}\int_{\R^N}{(u^+(x))^{2^*}}dx,\,\,\ \ \
u\in \dot{H}^\alpha(\R^N).$$
The limit equation of (1.1)
involving  critical Sobolev nonlinearity is
\begin{equation}\label{1.4}
\begin{cases}
(-\Delta)^{\alpha} u=\dis{|u|^{2^*-2}u},\\
u\in \dot{H}^\alpha(\R^N),
\end{cases}
\end{equation}
and the corresponding variational functional is
$$I_0(u)=\frac{1}{2}\int_{\R^N} |(-\Delta)^{\alpha/2}u(x)|^2dx-\frac{1}{2^*}\int_{\R^N}{(u^+(x))^{2^*}}dx,\,\,\ \ \
u\in \dot{H}^\alpha(\R^N).$$
Define
\begin{equation}\label{1.15}\dis
\ds S_{\alpha,\mu}=\inf_{u\in H^\alpha(\R^N)\backslash \{0\}}  \frac{c_{N,\alpha}\int_{\R^{2N}}\dis\frac{|u(x)-u(y)|^2}{|x-y|^{N+2\alpha}}dxdy-\mu\int_{\R^N}\frac{|u(x)|^{2}}{|x|^{2\alpha}}dx}{(\int_{\R^N}|u(x)|^{2^*}dx)^{2/2^*}},
\end{equation}	
the Euler equation associated to (\ref{1.15}) is  (\ref{1.12}).
In particular it has been showed  	in Theorem 1.2 of \cite{DMP} that for any positive solution $U_\mu(x) \in H^\alpha(\R^N)$ of (\ref{1.12}), there exist two positive constants $c, C$ such that
\begin{equation}\label{1.17}
\frac{c}{\Bigl(|x|^{1-\eta_\mu}(1+|x|^{2\eta_\mu})\Bigl )^{\frac{N-2\alpha}{2}}}\leq U_\mu(x)\leq \frac{C}{\Bigl (|x|^{1-\eta_\mu}(1+|x|^{2\eta_\mu})\Bigl )^{\frac{N-2\alpha}{2}}}, \text{ in } \R^N\backslash \{0\}
\end{equation}
where \begin{equation}\label{1.19}
\eta_\mu =1-\frac{2\alpha_\mu}{N-2\alpha},
\end{equation}
and $\alpha_\mu\in (0,\frac{N-2\alpha}{2})$ is a suitable parameter whose explicit value will be determined as the unique solution to the following equation
\begin{equation}\label{1.20jjj}
\varphi_{s,N}(\alpha_\mu)=2^{2\alpha} \frac{\Gamma(\frac{\alpha_\mu+2\alpha}{2})\Gamma(\frac{N-\alpha_\mu}{2})}{\Gamma(\frac{N-\alpha_\mu-2\alpha}{2})\Gamma(\frac{\alpha_\mu}{2})}=\mu,
\end{equation}
and $\varphi_{\alpha,N}$ is strictly increasing. That is
\begin{equation}\nonumber
\alpha_\mu=\varphi_{\alpha,N}^{-1}(\mu).
\end{equation}

 All the positive solutions of (\ref{1.12}) are of the form
 \begin{equation}\label{1.5j}
U_\mu^\varepsilon(x):=\varepsilon^{\frac{2\alpha-N}{2}} U_\mu
(x/\varepsilon).
 \end{equation}
 In particular, for $\mu=0$, it follows that (refer to \cite{C} )
  \begin{equation}\label{1.5} U_0(x)= \frac{C}{1+|x|^{N-2\alpha}},
 \end{equation}
 where $C>0$ is a constant.
 These solutions $U_0^{\va,y}:=\va^{\frac{2\alpha-N}{2}}U_0(\frac{x-y}{\va})$ are also minimizers
 for the quotient
 $$
 S_{\alpha}=\inf_{u\in
 	\dot{H}^\alpha(\mathbb{R}^N) \backslash\{0\}}\frac{\dis\int_{\mathbb{R}^N}|(-\Delta)^{\alpha/2}
 	u(x)|^2dx}{\Bigl(\dis\int_{\mathbb{R}^N}{|u(x)|^{2^*}}dx\Bigl)^{2/2^*}}.$$ 
 Define
\begin{equation}\label{1.11j}
D_\mu=\int_{\mathbb{R}^N}\Bigl(\frac{1}{2}(|(-\Delta)^{\alpha/2}
U_\mu(x)|^2-\mu\frac{U_\mu(x)^2}{|x|^{2\alpha}})-\frac{1}{2^*}|U_\mu(x)|^{2^*}\Bigl)dx=\frac{\alpha}{N}S_{\alpha,\mu}^{\frac{N}{2\alpha}},\end{equation}
\begin{equation}\label{1.10j}
D_0=\int_{\mathbb{R}^N}\Bigl(\frac{1}{2}|(-\Delta)^{\alpha/2}
U_0|^2-\frac{1}{2^*}|U_0|^{2^*}\Bigl)dx=\frac{\alpha}{N}S_{\alpha}^{\frac{N}{2\alpha}},\end{equation}
\begin{equation}\n=\{u\in H^{\alpha}(\R^N)\setminus
\{0\}\,\,|\,\,\int_{\R^N}\Bigl(|(-\Delta)^{\alpha/2}u(x)|^2+\bar
a|u(x)|^2-\bar k(u^+(x))^q\Bigl) dx=0\},
\end{equation}
\begin{equation}J^\infty=\inf_{u\in \n}I^\infty (u).\end{equation}
It is known that $\n \neq\emptyset$ since problem (\ref{(2.1)}) has
at least one positive solution if $N> 2\alpha$ (see Theorem 1.3 in \cite{ZZX})
for $2<q<2^*$ and $\bar k>\lambda^*(\lambda^*>0$ is a positive constant definded in \cite{ZZX}).

The main result of our paper is as follows:
\begin{theorem}\label{a}
Suppose $a(x),\,k(x)$ satisfy (a) and (b), $\bar k>\lambda^*$, $ 2<q<2^*$, $0<\alpha<1$, $N>4\alpha$, $0<\mu <\phi_{\alpha,N}(\frac{N-4\alpha}{2})$. Assume that $\{u_n\}$ is a positive
Palais-Smale sequence of I at level $d\geq 0$, then there exist
sequences $\{y^k_n\}\subset\R^N\,(1\leq k\leq
l_1)$,
$\{R^j_n\}\subset\R^+\,(1\leq j\leq l_2$), $\{\bar R^i_n\}\subset\R^+,\,\,\,x_n^i\subset\R^N (1\leq i\leq l_3$)  and   $u_k\in H^{\alpha}(\R^N) \,(1\leq k\leq
l_1) (l_1,l_2,l_3\in \N)$, $ u\in H^{\alpha}(\R^N)$, such that up to a subsequence:

$\,\,\displaystyle
d=I(u)+\sum^{l_1}_{j=1}I^\infty(u_k)+l_2D_\mu+l_3D_0;$

\begin{equation}\label{b1}\,\,
\|u_n-u-\sum^{l_1}_{k=1}u_k(x-y^k_n)-\sum^{l_2}_{j=1}U^{R^j_n}-\sum^{l_3}_{i=1}U_0^{\bar R^i_n, x^i_n}\|_{H^{\alpha}(\R^N)}=o(1) \text{ as } n\rightarrow \infty,
\,\,\hspace{2.5cm}\end{equation}
where $u$ and $u_k\,(1\leq k\leq l_1)$ satisfy
$$I' (u)=0,\,\,{I^{\infty} }'(u_k)=0$$ and
 $$|y^k_n|\rightarrow \infty\,(1\leq k\leq l_1), \,R^j_n\rightarrow 0\,\,(1\leq j\leq
l_2),\,\,\bar R^i_n\rightarrow 0\,\,,|\frac{x_n^i}{\bar R_n^i}|\rightarrow \infty\,\,(1\leq i\leq
l_3),\text{ as
}n\rightarrow\infty.$$
 In particular, if
$u\not\equiv 0$, then $u$ is a weakly solution of (\ref{1.1}). Note
that the corresponding sum in (\ref{b1}) will be treated as zero if
$l_i=0\,(i=1,2,3).$
\end{theorem}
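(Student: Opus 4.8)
The plan is to prove this global compactness result by induction on the energy level, following the classical Struwe--type profile decomposition adapted to the three distinct sources of non-compactness identified in the paper. First I would establish that any positive Palais--Smale sequence $\{u_n\}$ is bounded in $H^\alpha(\R^N)$; this follows in the standard way by combining $I(u_n)\to d$ and $\langle I'(u_n),u_n\rangle=o(1)\|u_n\|$, using $2<q<2^*$ so that the nonlinear terms can be absorbed, and crucially invoking the restriction $0<\mu<\phi_{\alpha,N}(\tfrac{N-4\alpha}{2})$ together with the fractional Hardy inequality (\ref{1.4j}) to guarantee that the quadratic form $\int(|(-\Delta)^{\alpha/2}u|^2-\mu\frac{u^2}{|x|^{2\alpha}})$ is coercive and equivalent to the $\dot H^\alpha$-norm. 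After passing to a subsequence, $u_n\rightharpoonup u$ weakly, and a routine argument shows $I'(u)=0$, so $u$ is a (weak) solution of (\ref{1.1}).

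Next I would set $v_n^{(0)}=u_n-u$ and show that $\{v_n^{(0)}\}$ is itself a Palais--Smale sequence for a ``residual'' functional at the shifted level $d-I(u)$, using the Brezis--Lieb lemma to split both the critical Sobolev integral $\int(u_n^+)^{2^*}$ and the Hardy integral $\int\frac{u_n^2}{|x|^{2\alpha}}$. If $v_n^{(0)}\to 0$ strongly we are done with $l_1=l_2=l_3=0$; otherwise I must locate and subtract off one bubble. This is where the three cases branch, and deciding which bubble to extract is the main obstacle. The concentration behavior is detected via a Morrey-space / concentration-function analysis (the spaces $L^{p,\nu}$ were introduced precisely for this): one tracks the scale $\va_n$ and center $z_n$ of concentration of $v_n^{(0)}$. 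There are exactly three scenarios governed by the position of $z_n$ relative to the singular point $0$ and its distance at the concentration scale. If the concentration points escape to infinity ($|y_n^k|\to\infty$) at unit scale, the bubble is a translated solution $u_k$ of the limit equation (\ref{(2.1)}), contributing $I^\infty(u_k)$. If concentration occurs at the origin with $\va_n=R_n^j\to 0$, the rescaling $v_n^{(0)}(R_n^j\,\cdot\,)R_n^{j\,(N-2\alpha)/2}$ converges to a solution $U_\mu$ of the Hardy-critical limit equation (\ref{1.12}), contributing $D_\mu$. If concentration occurs at a point $x_n^i$ that, after rescaling by $\bar R_n^i\to 0$, satisfies $|x_n^i/\bar R_n^i|\to\infty$ (so the singular Hardy term becomes asymptotically invisible at that scale), the limiting profile is the Aubin--Talenti--type $U_0$ solving (\ref{1.4}), contributing $D_0$.

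In each case I would rescale $v_n^{(0)}$ around the detected center and scale, pass to the weak limit to identify the profile ($u_k$, $U_\mu$, or $U_0$), and then subtract the rescaled profile to form $v_n^{(1)}$. The key technical lemma is that this subtraction again produces a Palais--Smale sequence whose energy has dropped by exactly the quantized amount $I^\infty(u_k)$, $D_\mu$, or $D_0$; this rests on the translation/dilation invariances of the respective limit functionals and on Brezis--Lieb-type splitting together with the interaction estimates showing the cross terms between distinct bubbles (or between bubbles and $u$) are $o(1)$, which in turn uses the decay estimate (\ref{1.17}) and the explicit form (\ref{1.5}). Because each extracted bubble carries energy bounded below by a fixed positive constant $\min\{D_\mu,D_0,\inf_{\n}I^\infty\}>0$ while the total energy $d$ is finite, the iteration terminates after finitely many steps, yielding integers $l_1,l_2,l_3$ and the decomposition (\ref{b1}) with the energy identity.

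The hardest part will be the bookkeeping that guarantees \emph{mutual orthogonality} of all the extracted bubbles across the three types simultaneously: one must show that the scales and centers $(y_n^k)$, $(R_n^j)$, $(\bar R_n^i,x_n^i)$ separate from one another (in the sense that the corresponding rescaled profiles have vanishing $H^\alpha$-interaction), so that the energies genuinely add and no bubble is counted twice. In particular, distinguishing a Hardy-bubble (centered at $0$, profile $U_\mu$) from an escaping-singularity bubble (profile $U_0$ with $|x_n^i/\bar R_n^i|\to\infty$) requires a careful analysis of whether the concentration scale sees the potential $|x|^{-2\alpha}$, and this is precisely where the presence of the Hardy term makes the argument genuinely more delicate than the classical Struwe decomposition; controlling the Hardy term under rescaling and passing to the limit in the nonlocal quadratic form are the steps I expect to demand the most care.
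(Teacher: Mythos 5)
Your proposal follows the paper's strategy in all essentials: boundedness via coercivity of the Hardy-perturbed quadratic form (Lemma \ref{3.5}), subtraction of the weak limit with Br\'ezis--Lieb splitting (Lemma \ref{q} — note the remainder is a Palais--Smale sequence for $I$ itself, not for a ``residual'' functional), detection of concentration via the improved Sobolev/Morrey embedding (Lemma \ref{l6}), the same three profiles with the same classification of the limit equations (Lemmas \ref{3.7}, \ref{l2.3}, \ref{l4.6}), and iterative extraction with quantized energy drops. The one place where the paper's organization genuinely differs — and it is exactly the device that resolves the two points you single out as hardest — is the \emph{order} of extraction. The paper first removes \emph{all} translation bubbles (Step 1, via the Lions vanishing/nonvanishing dichotomy applied to $|v_n|^2$, with escaping centers forced by $v_n\to 0$ in $L^p_{\mathrm{loc}}$), stopping only when the remainder satisfies $\|v_n^+\|_{L^q(\R^N)}\to 0$, i.e.\ (\ref{2.42}); this is then fed into Step 2, where in estimate (\ref{2.47}) the vanishing $L^q$ norm together with $q<2^*$ forces the Morrey concentration scales to satisfy $r_n\to 0$, so that only the two dilation cases remain ($x_n/r_n$ bounded giving $U_\mu$ and $D_\mu$; $|x_n/r_n|\to\infty$ giving $U_0$ and $D_0$). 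In your unified one-bubble-at-a-time scheme, the Morrey detection at an intermediate stage can return scales bounded away from zero, and your assertion that ``there are exactly three scenarios'' would itself need proof (ruling out $r_n\to\infty$ by the $L^2$ bound, ruling out bounded scales with bounded centers by $v_n\rightharpoonup 0$, and absorbing bounded scales with escaping centers into translation bubbles); this is doable, but it is precisely the case analysis the paper's ordering makes automatic. Likewise, your concern about mutual orthogonality of bubbles dissolves in the sequential scheme that both you and the paper use: each subtraction produces a fresh Palais--Smale sequence of $I$ with weak limit zero (Lemmas \ref{3.7} and \ref{l2.3}), so the energies add by construction and no pairwise interaction estimates are required. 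Finally, the paper terminates the iteration not by your (valid, arguably cleaner) energy-quantization argument but by fixed decrements of $\int_{\R^N}(v_n^+)^q\,dx$ and $\int_{\R^N}(v_n^+)^{2^*}dx$, see (\ref{2.56j}) and (\ref{3.18}) with Lemma \ref{l4.8}; both routes close the proof.
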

{\bf Remarks:}


 1)  Similar as Corollary 3.3 in \cite{Sm}, one can show that any Palais-Smale sequence for $I$ at a level
 which is not of the form $m_1D_0+m_2D_\mu+m_3 J^\infty$,
 $m_1,m_2\in\N\bigcup \{0\}$, gives rise to a non-trivial weak
 solution of equation (1.1).

 2) In our non-compactness analysis, we prove
 that the blowing up positive Palais-Smale sequences can bear exactly
 three kinds of bubbles. Up to harmless constants, they are either
 of the form
 $$U_\mu^{R_n}(x), \text{
 }|R_n|\rightarrow 0\text{ as } n\rightarrow
 \infty, $$
 or
 $$U_0^{\bar R_n,x_n}(x), \text{
 }|\bar R_n|\rightarrow 0, \frac{|x_n|}{\bar R_n}\rightarrow \infty\text{ as } n\rightarrow
 \infty,
 $$
or  $$u(x-y_n)\in H^{\alpha}(\R^N),\,\text{
}|y_n|\rightarrow\infty,\text{ as } n\rightarrow
 \infty,$$where $u$ is the solution of
(\ref{(2.1)}). For any Palais-Smale sequence ${u_n} $ for $I$, ruling out the above two bubbles yields  the existence of a non-trivial weak solution of equation (1.1).

Using the compactness results and the Mountain Pass Theorem \cite{BN}
 we prove
the following existence result.
\begin{theorem}\label{b}
Assume that $2<q<2^*$, $0<\alpha<1$, $N> 4\alpha$, $0<\mu <\phi_{\alpha,N}(\frac{N-4\alpha}{2})$. 
 If $a(x),k(x)$ satisfy (a), (b) and
\begin{equation}
\bar a\geq a(x), k(x)\geq \bar k>0,\,k(x)\not
\equiv \bar k
\end{equation}
 Then  (\ref{1.1}) has a nontrivial solution $u\in H^{\alpha}(\R^N)$ which
satisfies $$I(u)< \min\{\frac{\alpha}{N}S_{\alpha,\mu}^{\frac{N}{2\alpha}}, J^\infty\}.$$
\end{theorem}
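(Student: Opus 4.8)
The plan is to produce a Palais--Smale sequence for $I$ by the Mountain Pass Theorem and then to invoke the global compactness decomposition of Theorem \ref{a} to force strong convergence. Writing $\|u\|_*^2=\int_{\R^N}\big(|(-\Delta)^{\alpha/2}u|^2-\mu\frac{u^2}{|x|^{2\alpha}}+a(x)u^2\big)dx$, the first task is to check the mountain pass geometry. Coercivity of $\|\cdot\|_*$ on $H^\alpha(\R^N)$ follows from the fractional Hardy inequality (\ref{1.4j}): a direct computation from (\ref{1.20jjj}) gives $\phi_{\alpha,N}(\frac{N-2\alpha}{2})=\Gamma_{N,\alpha}$, so, since $\phi_{\alpha,N}$ is increasing and $\frac{N-4\alpha}{2}<\frac{N-2\alpha}{2}$, the hypothesis $\mu<\phi_{\alpha,N}(\frac{N-4\alpha}{2})$ yields $\mu<\Gamma_{N,\alpha}$ and hence $\|u\|_*^2\geq(1-\mu/\Gamma_{N,\alpha})\|u\|_{\dot H^\alpha}^2+\hat a\|u\|_{L^2}^2$, equivalent to $\|u\|_{H^\alpha}^2$. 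By the Sobolev embedding and $2<q<2^*$ the critical and subcritical terms are of higher order near $0$, so $I(u)\geq\beta>0$ on a small sphere $\|u\|_{H^\alpha}=\rho$; and for fixed $u\geq0$, $u\not\equiv0$ one has $I(tu)\to-\infty$ as $t\to+\infty$ since $2^*>2$. Thus $c=\inf_{\gamma}\max_{t}I(\gamma(t))\geq\beta>0$ is well defined, and a deformation argument carried out within the cone of nonnegative functions (using that the nonlinearities depend only on $u^+$ together with $\|u^+\|_{\dot H^\alpha}\leq\|u\|_{\dot H^\alpha}$) produces a \emph{positive} Palais--Smale sequence at level $c$.

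The heart of the matter, and the main obstacle, is the strict energy estimate $0<c<\min\{D_\mu,J^\infty\}$, which I would establish with two families of test functions. For $c<J^\infty$, take the positive ground state $w$ of the limit problem (\ref{(2.1)}), so that $\max_{t\geq0}I^\infty(tw)=J^\infty$. Since $a(x)\leq\bar a$, $k(x)\geq\bar k$ with $k\not\equiv\bar k$, and $-\mu\int w^2/|x|^{2\alpha}\leq0$, one gets $I(tw)\leq I^\infty(tw)$ for every $t\geq0$, with \emph{strict} inequality for $t>0$ because of $-\frac{t^q}{q}\int(k-\bar k)(w^+)^q<0$. Evaluating at the maximizing $t_0>0$ of $I(tw)$ gives $\max_tI(tw)=I(t_0w)<I^\infty(t_0w)\leq J^\infty$, hence $c<J^\infty$. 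For $c<D_\mu$, take the Hardy--Sobolev bubble $U_\mu^\va(x)=\va^{(2\alpha-N)/2}U_\mu(x/\va)$ of (\ref{1.5j}). By scale invariance the Hardy--Sobolev and critical integrals of $U_\mu^\va$ are independent of $\va$ and equal to $S_{\alpha,\mu}^{N/2\alpha}$, so that
$$I(tU_\mu^\va)=\Big(\tfrac{t^2}{2}-\tfrac{t^{2^*}}{2^*}\Big)S_{\alpha,\mu}^{N/2\alpha}+\tfrac{t^2}{2}\int_{\R^N} a(U_\mu^\va)^2\,dx-\tfrac{t^q}{q}\int_{\R^N} k(U_\mu^\va)^q\,dx.$$
The first term is maximized at $t=1$ with value $D_\mu=\frac{\alpha}{N}S_{\alpha,\mu}^{N/2\alpha}$, so everything reduces to comparing the two remaining terms as $\va\to0$.

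This is where the arithmetic hypotheses enter decisively. The decay bounds (\ref{1.17}), together with $\eta_\mu$ from (\ref{1.19}), show that $U_\mu(x)\sim|x|^{-\alpha_\mu}$ near the origin and $U_\mu(x)\sim|x|^{-(1+\eta_\mu)(N-2\alpha)/2}$ at infinity; consequently $\int U_\mu^2<\infty$ precisely when $\eta_\mu>\frac{2\alpha}{N-2\alpha}$, i.e. when $\alpha_\mu<\frac{N-4\alpha}{2}$, which (since $\alpha_\mu=\varphi_{\alpha,N}^{-1}(\mu)$) is exactly the standing assumption $\mu<\phi_{\alpha,N}(\frac{N-4\alpha}{2})$, the hypothesis $N>4\alpha$ guaranteeing this range is nonempty. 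Hence $\int a(U_\mu^\va)^2\,dx=O(\va^{2\alpha})$, while $\int U_\mu^q$ converges for $2<q<2^*$ (as $q\alpha_\mu<2^*\alpha_\mu<N$ near $0$) and a direct scaling computation gives $\int k(U_\mu^\va)^q\,dx\geq c_0\,\va^{\,\sigma}$ with $\sigma=N-\frac{(N-2\alpha)q}{2}$ and $c_0>0$, using $k\geq\hat k>0$. Since $q>2$ forces $0<\sigma<2\alpha$, the negative subcritical contribution dominates the positive $L^2$-contribution, so $\max_tI(tU_\mu^\va)<D_\mu$ for $\va$ small, giving $c<D_\mu$. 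The delicate point throughout is to justify these integrabilities from (\ref{1.17}) and to control the cross terms in the expansion uniformly for $t$ in compact intervals.

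Finally I would feed the positive Palais--Smale sequence at level $c$ into Theorem \ref{a}. In the decomposition $c=I(u)+\sum_{k=1}^{l_1}I^\infty(u_k)+l_2D_\mu+l_3D_0$ every summand is nonnegative: any critical point $v$ of $I$ (resp.\ $I^\infty$) satisfies $I(v)=(\tfrac12-\tfrac1{2^*})\int(v^+)^{2^*}+(\tfrac12-\tfrac1q)\int k(v^+)^q\geq0$ by testing the equation with $v$, and each nontrivial $u_k$ is a solution of (\ref{(2.1)}), so $I^\infty(u_k)\geq J^\infty$. Because $S_{\alpha,\mu}\leq S_\alpha$ gives $D_\mu\leq D_0$, any nonzero $l_2$ or $l_3$ would force $c\geq D_\mu$ and any nonzero $l_1$ would force $c\geq J^\infty$, both contradicting $c<\min\{D_\mu,J^\infty\}$. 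Therefore $l_1=l_2=l_3=0$, so $u_n\to u$ strongly in $H^\alpha(\R^N)$ with $I'(u)=0$ and $I(u)=c$. As $c>0$ we have $u\not\equiv0$, and the nonnegativity built into the construction together with the strong maximum principle for $(-\Delta)^\alpha$ upgrades $u$ to a positive solution of (\ref{1.1}) satisfying $I(u)=c<\min\{\frac{\alpha}{N}S_{\alpha,\mu}^{N/2\alpha},J^\infty\}$, as claimed.
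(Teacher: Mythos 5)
Your proposal is correct and follows essentially the same route as the paper: mountain pass geometry, the two strict level estimates $c<D_\mu$ (via the bubble $U_\mu^\va$, with $\mu<\phi_{\alpha,N}(\frac{N-4\alpha}{2})$ guaranteeing $\int_{\R^N}(U_\mu^\va)^2dx=O(\va^{2\alpha})$, which is dominated by the subcritical term of order $\va^{N-\frac{(N-2\alpha)q}{2}}$ since $q>2$) and $c<J^\infty$ (comparing $I$ with $I^\infty$ along the ground state of the limit problem, using $a\leq\bar a$, $k\geq\bar k$, $k\not\equiv\bar k$), followed by the decomposition of Theorem \ref{a} to exclude all bubbles below $\min\{D_\mu,J^\infty\}$. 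The only cosmetic differences are that the paper works with the $L^{2^*}$-normalized bubble $v_\va$ and obtains the positive Palais--Smale sequence by passing to positive parts (Lemma \ref{l4.5}) rather than by a cone-constrained deformation.
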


This paper is organized as follows.  In Section 2, we prove Theorem \ref{a} by
carefully analyzing  the features of a positive Palais-Smale
sequence for $I$. Theorem 1.2 is proved in Section 3 by
applying Theorem \ref{a} and the Mountain Pass Theorem.
Finally we put some preliminaries in the last section as an appendix.

\section{ Non-compactness analysis}
In this section, we prove Theorem 1.1 by using the Concentration-Compactness
Principle and a delicate analysis of the Palais-Smale sequences of
$I$.
Firstly we give the following Lemmas.
\begin{lemma}\label{l6}
	Let $0<\alpha <N/2 $,  $\{u_n\}\subset {\dot{H}}^\alpha(\R^N)$ be a bounded sequence such that
	\begin{equation}\label{l6.1}
	\inf_{n\in \N}\int_{\R^N}{\bigl(u^+_n(x)\bigl)^{2^*}}dx\geq c>0.
	\end{equation}
	Then, up to subsequence, there exist two sequences $\{r_n\}\subset \R^+$ and $\{x_n\}  \subset \R^N $
	such that
	\begin{equation}\label{l6.2}
	\bar u_n \rightharpoonup w \text{ in } \dot{H}^\alpha(\R^N) \text { with } w \not\equiv 0, \end{equation}
		where \begin{equation}\label{2.3j}
		\begin{aligned}
		\bar u_n(x)=\begin{cases}
	r_n^{\frac{N-2\alpha}{2}}u_n(r_n x) &\text{ when }\frac{x_n}{r_n} \text{ is bounded, }\\
	r_n^{\frac{N-2\alpha}{2}}u_n(r_n x+x_n) &\text { when } |\frac{x_n}{r_n}|\rightarrow \infty.
		\end{cases}
		\end{aligned}
		\end{equation}
\end{lemma}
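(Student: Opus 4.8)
The plan is to reduce the nonvanishing hypothesis \eqref{l6.1} to a uniform lower bound on a scale-invariant Morrey norm, then to choose the scale $r_n$ and the center $x_n$ that capture this mass, and finally to rescale and pass to a weak limit that is forced to be nontrivial by local compactness. The only nonelementary ingredient is the refined (improved) Sobolev inequality in Morrey norm that underlies the concentration-compactness framework of \cite{PP}; everything else is a change of variables together with the scale invariance of the $\dot{H}^\alpha(\R^N)$ norm under the transformations in \eqref{2.3j}.

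First I would invoke the refined Sobolev inequality, valid for every $v\in\dot{H}^\alpha(\R^N)$,
\[
\|v\|_{L^{2^*}(\R^N)}^{2^*}\le C\,\|v\|_{\dot{H}^\alpha(\R^N)}^{2}\,\|v\|_{L^{1,\frac{N-2\alpha}{2}}(\R^N)}^{2^*-2},
\]
whose scaling is consistent because $L^{1,\frac{N-2\alpha}{2}}$ is exactly the scale-invariant Morrey norm for the transformation $v\mapsto r^{\frac{N-2\alpha}{2}}v(r\cdot)$. Applying this to $v=u_n^+$, using $\|u_n^+\|_{\dot{H}^\alpha}\le\|u_n\|_{\dot{H}^\alpha}\le M$ (the inequality for the positive part recalled in the introduction) together with \eqref{l6.1}, I obtain a constant $\delta>0$, independent of $n$, with $\|u_n^+\|_{L^{1,\frac{N-2\alpha}{2}}}\ge\delta$. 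By the definition of the Morrey norm, for each $n$ there then exist $r_n>0$ and $x_n\in\R^N$ such that
\[
r_n^{\frac{N-2\alpha}{2}-N}\int_{B(x_n,r_n)}u_n^+(y)\,dy\ge\frac{\delta}{2}.
\]

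Next I would rescale. Setting $\bar u_n$ as in \eqref{2.3j}, the scale invariance of the $\dot{H}^\alpha$ norm keeps $\{\bar u_n\}$ bounded, so up to a subsequence $\bar u_n\rightharpoonup w$ in $\dot{H}^\alpha(\R^N)$. A change of variables turns the concentration inequality into $\int_{B(0,1)}\bar u_n^+\,dx\ge\delta/2$ in the translated case $|x_n/r_n|\to\infty$, and into $\int_{B(x_n/r_n,1)}\bar u_n^+\,dx\ge\delta/2$ in the nontranslated case; in the latter the boundedness of $x_n/r_n$ confines this ball to a fixed $B(0,R_0)$. Either way the mass concentrates in a fixed bounded region, and the compact embedding $\dot{H}^\alpha(\R^N)\hookrightarrow\hookrightarrow L^1_{\mathrm{loc}}(\R^N)$ gives $\bar u_n^+\to w^+$ in $L^1$ of that region, whence $\int w^+\ge\delta/2>0$ and $w\not\equiv0$, which is \eqref{l6.2}.

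The dichotomy on whether $x_n/r_n$ stays bounded or diverges is what produces the two cases in \eqref{2.3j}, and I would settle it by passing to a subsequence. The main obstacle is not any single estimate but the correct bookkeeping: one must verify that the refined inequality applies to $u_n^+$ (handled by $\|u_n^+\|_{\dot{H}^\alpha}\le\|u_n\|_{\dot{H}^\alpha}$), that the chosen $(r_n,x_n)$ genuinely localize a fixed fraction of the mass after rescaling, and that local compactness transfers the lower bound to $w^+$ rather than losing it to vanishing or to escape at infinity, precisely the behavior that the recentering in \eqref{2.3j} is designed to control.
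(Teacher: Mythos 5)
Your proposal is correct and follows essentially the same route as the paper's proof: an improved Sobolev inequality in a scale-invariant Morrey norm (Theorem 1 of \cite{PP}) converts the hypothesis into a uniform lower bound on a Morrey norm, one then picks near-optimizing balls $B(x_n,r_n)$, rescales and recenters exactly as in the lemma, and local compactness of the rescaled sequence forces the weak limit to be nontrivial. The only cosmetic differences are that you bound below the $L^{1,\frac{N-2\alpha}{2}}$-norm of $u_n^+$ (a slightly stronger variant of the cited inequality) and use $L^1_{\mathrm{loc}}$ convergence, whereas the paper bounds the $L^{2,N-2\alpha}$-norm of $u_n$ and uses $L^2_{\mathrm{loc}}$ convergence; both versions of the argument run identically.
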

\begin{proof}

	By Theorem 1 in \cite{PP},
	\begin{eqnarray}\label{2.4jj}
	 (\int_{\R^N}{|u_n(x)|^{2^*}}dx\bigl )^{1/2^*}\leq C \| u_n\|^\theta_{\dot{H}^\alpha(\R^N)}\| u_n\|^{1-\theta}_{L^{2,N-2\alpha}(\R^N)},
	\end{eqnarray}
	where $\frac{N-2\alpha}{N}\leq \theta<1$.

	Then there exists a constant $c>0$ such that
	\begin{equation}\label{2.22j}
	\| u_n\|_{L^{2,N-2\alpha}(\R^N)}^2=\sup_{\bar x\in \R^N,\, R\in \R^+}R^{-2\alpha}\int_{B(\bar x,R)}|u_n(x)|^2dx\geq c>0.
	\end{equation}

	From (\ref{2.22j}), we may find $r_n>0$ and $x_n \in \R^N$ such that for $n$ large enough,
	\begin{equation}\label{2.23j}
	r_n^{-2\alpha}\int_{B(x_n,r_n)}|u_n(x)|^2dx\geq \| u_n\|_{L^{2,N-2\alpha}(\R^N)}^2-\frac{c}{2n}\geq c/2>0.
	\end{equation}
	Denote	 \begin{equation}\label{2.3}
	\begin{aligned}
	\bar u_n(x)=\begin{cases}
	r_n^{\frac{N-2\alpha}{2}}u_n(r_n x) &\text{ when }\frac{x_n}{r_n} \text{ is bounded, }\\
	r_n^{\frac{N-2\alpha}{2}}u_n(r_n x+x_n) &\text { when } |\frac{x_n}{r_n}|\rightarrow \infty.
	\end{cases}
	\end{aligned}
	\end{equation}
	Since $\{u_n\}$ is bounded in $\dot{H}^\alpha(\R^N)$, from the scaling and translation invariance of  $\dot{H}^\alpha(\R^N)$, 	then $\{\bar u_n\}$ is bounded in $\dot{H}^\alpha(\R^N)$, therefore, up to a subsequence  (still denoted by $\bar u_n$),
	$$\bar u_n \rightharpoonup w \text{ in } \dot{H}^\alpha(\R^N) \text{ and }\bar u_n \rightarrow w \text{ in } L^2_{\mathrm{loc}}(\R^N), \text{ as } n\rightarrow \infty.$$
	
	If $\frac{x_n}{r_n}$ is bounded, there exists a $\tilde{R}>1$ such that $B(\frac{x_n}{r_n},1)\subset B(0,\tilde{R})$, then
	\begin{eqnarray}\label{2.9}
	c/2<\int_{B(\frac{x_n}{r_n},1)}|\bar u_n(x)|^2dx\leq \int_{B(0,\tilde{R})}|\bar u_n(x)|^2dx\rightarrow \int_{B(0,\tilde{R})}|w(x)|^2dx.
	\end{eqnarray}
	If $|\frac{x_n}{r_n}|\rightarrow \infty$, then
		\begin{eqnarray}\label{2.10j}
		c/2<\int_{B(0,1)}|\bar u_n(x)|^2dx\leq \int_{B(0,\tilde{R})}|\bar u_n(x)|^2dx\rightarrow \int_{B(0,\tilde{R})}|w(x)|^2dx
		\end{eqnarray}
			where $\tilde{R}>1$.
		Obviously we have $w\not \equiv 0$.
		From (\ref{2.9}) and (\ref{2.10j}), Lemma \ref{l6} is complete.
\end{proof}
\begin{lemma}\label{3.7}Assume $N>4\alpha,2<q< 2^*, 0<\alpha<1$.	Let $\{v_n\}\subset H^{\alpha}(\R^N)$ be a Palais-Smale  sequence of $I$ at
	level $d_1$ and $v_n\rightharpoonup 0\text{ in } H^{\alpha}(\R^N)\text{
		as }n\rightarrow \infty$. If there exists two sequence $\{r_n\}\subset
	\R^+$ and $\{x_n\}\in \R^N$ $\text{ with } r_n\rightarrow 0, \, \frac{|x_n|}{r_n}\rightarrow \infty$ as $n\rightarrow \infty$   such
	that $\bar v_n(x):=r_n^{\frac{N-2\alpha}{2}}v_n (r_n x+x_n)$ converges weakly
	in $\dot{H}^\alpha(\R^N)$ and almost everywhere to some $v_0\in
	\dot{H}^\alpha(\R^N)\text{ as }n\rightarrow \infty$ with $v_0\not\equiv 0$, then
	 $v_0$ solves
	(\ref{1.4}), the sequence
	$z_n(x):=v_n(x)-v_0(\frac{x-x_n}{r_n})r_n^{\frac{2\alpha-N}{2}}\rightharpoonup 0\text{ in } H^{\alpha}(\R^N)$ and $z_n(x)$ is a
	Palais-Smale sequence of $I$ at level $d_1-I_0(v_0)$. \end{lemma}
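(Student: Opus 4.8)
The plan is to follow the standard "bubble-extraction" strategy adapted to the fractional setting with the Hardy term switched off at the scaling limit. First I would verify that the rescaled-and-translated sequence $\bar v_n(x) = r_n^{(N-2\alpha)/2} v_n(r_n x + x_n)$ is a Palais--Smale sequence for the constant-coefficient critical functional $I_0$. The key point is that all the lower-order and inhomogeneous pieces of $I$ vanish in the limit under this scaling: the subcritical term $\int k(x)(v_n^+)^q$, the $L^2$-term $\int a(x) v_n^2$, and crucially the Hardy term $\mu\int v_n^2/|x|^{2\alpha}$. For the Hardy term, because $|x_n|/r_n \to \infty$, after recentering at $x_n$ the singularity $1/|x|^{2\alpha}$ sits at the point $-x_n/r_n$, which escapes to infinity; combined with the bound $N > 4\alpha$ and the decay estimate on the weak limit, this forces the Hardy contribution to $\bar v_n$ to be negligible. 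The $\dot H^\alpha$-seminorm and the critical term $\int (v_n^+)^{2^*}$ are invariant under $u \mapsto r_n^{(N-2\alpha)/2} u(r_n x + x_n)$, so what survives is exactly $I_0$. Hence $I_0(\bar v_n) \to d_1$ and $I_0'(\bar v_n) \to 0$ in $(\dot H^\alpha)^*$.

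Next I would conclude that the weak limit $v_0$ solves (\ref{1.4}). Since $\bar v_n \rightharpoonup v_0$ weakly in $\dot H^\alpha(\R^N)$ and $I_0'(\bar v_n) \to 0$, testing against a fixed $\varphi \in C_0^\infty(\R^N)$ and passing to the limit gives $I_0'(v_0) = 0$, i.e.\ $v_0$ is a weak solution of $(-\Delta)^\alpha v_0 = (v_0^+)^{2^*-1}$; since the Palais--Smale sequence is assumed positive, $v_0 \geq 0$ and it solves (\ref{1.4}). The passage to the limit in the critical term uses the a.e.\ convergence together with the Brezis--Lieb lemma (or the boundedness of $(\bar v_n^+)^{2^*-1}$ in $L^{(2^*)'}$ and weak convergence).

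Then I would analyze $z_n(x) = v_n(x) - r_n^{(2\alpha-N)/2} v_0\big(\tfrac{x-x_n}{r_n}\big)$. First, $z_n \rightharpoonup 0$ in $H^\alpha$: the rescaled bubble is concentrating ($r_n \to 0$) and escaping ($|x_n|/r_n \to \infty$), so it converges weakly to zero in the nonhomogeneous space, and since $v_n \rightharpoonup 0$ by hypothesis, so does $z_n$. For the energy and derivative splitting, I would apply a Brezis--Lieb-type decomposition to both the critical term $\int (v_n^+)^{2^*}$ and the Gagliardo seminorm, using the fact that the interaction (cross) terms vanish because the bubble concentrates at a vanishing scale around a point running to infinity. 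This yields $I(z_n) = I(v_n) - I_0(v_0) + o(1) = d_1 - I_0(v_0) + o(1)$ and, similarly, $I'(z_n) \to 0$, so $\{z_n\}$ is a Palais--Smale sequence for $I$ at level $d_1 - I_0(v_0)$. Establishing that the lower-order terms evaluated on $z_n$ match those on $v_n$ up to $o(1)$ again relies on the escaping/concentrating nature of the subtracted bubble, so that its mass does not interact with $a(x)$, $k(x)$, or the Hardy potential.

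The main obstacle I expect is controlling the Hardy term rigorously, both when showing it drops out for $\bar v_n$ and when showing the splitting for $z_n$ is clean. Unlike the purely critical terms, the Hardy potential is not scaling-invariant once it is recentered at $x_n$, and one must quantify that its effect is $o(1)$; this is exactly where the hypotheses $r_n \to 0$, $|x_n|/r_n \to \infty$, and $N > 4\alpha$ (which guarantees $\alpha_\mu < (N-4\alpha)/2$ is admissible and controls integrability of $U_\mu$-type tails near the singularity) must be used together. I would handle this by splitting the integral $\int v_n^2/|x|^{2\alpha}$ over a ball around the origin and its complement, using the Hardy inequality (\ref{1.4j}) on the piece near the singularity and the decay/concentration of the bubble on the far piece, so that the dominant scale of $z_n$'s concentrated part never sees the singularity.
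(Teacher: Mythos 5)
Your strategy is essentially the paper's: rescale compactly supported test functions, kill the Hardy and subcritical terms using $r_n\to 0$ and $|x_n|/r_n\to\infty$, conclude $I_0'(v_0)=0$, and then split $I(z_n)$ and $I'(z_n)$ by Br\'ezis--Lieb. However, two of your intermediate claims are false as stated. By scaling and translation invariance of the Gagliardo seminorm and of the critical integral, $I_0(\bar v_n)=\frac12\|v_n\|^2_{\dot H^\alpha(\R^N)}-\frac{1}{2^*}\int_{\R^N}(v_n^+)^{2^*}dx$, so $I(v_n)-I_0(\bar v_n)$ equals the \emph{global} Hardy term plus the global lower-order terms of $v_n$; weak convergence $v_n\rightharpoonup 0$ does not make these vanish, and they genuinely need not vanish, because $v_n$ may carry further profiles invisible at the scale $(r_n,x_n)$: a second bubble $U_\mu^{R_n}$ concentrating at the origin keeps $\mu\int_{\R^N} v_n^2|x|^{-2\alpha}dx$ bounded away from zero, and a translated solution $u_1(\cdot-y_n)$, $|y_n|\to\infty$, keeps $\int_{\R^N} a\, v_n^2\,dx$ and $\int_{\R^N} k (v_n^+)^q dx$ bounded away from zero. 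Hence ``$I_0(\bar v_n)\to d_1$'' is wrong, and for the same reason $I_0'(\bar v_n)\to 0$ cannot hold in the dual norm of $\dot H^\alpha(\R^N)$: test functions concentrated near the shifted singularity $-x_n/r_n$ (where another profile may sit) see the discarded terms at full strength. What is true, sufficient, and is exactly what the paper proves in (\ref{j2.13}), is the local statement: for each \emph{fixed} $\phi\in C^\infty_0(B(0,r))$ one has $\langle I_0'(\bar v_n),\phi\rangle=\langle I'(v_n),\phi_n\rangle+o(1)=o(1)$, where $\phi_n=r_n^{-\frac{N-2\alpha}{2}}\phi(\frac{\cdot-x_n}{r_n})$, because on $\mathrm{supp}\,\phi$ the shifted Hardy weight is $O\bigl((|x_n|/r_n-r)^{-2\alpha}\bigr)$ and the rescaled lower-order terms carry the positive powers $r_n^{2\alpha}$ and $r_n^{N-\frac{(N-2\alpha)q}{2}}$. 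Your deduction of $I_0'(v_0)=0$ only uses this tested version, so the repair is local; but as written that step fails.

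Two smaller misattributions. The lemma does not assume the Palais--Smale sequence is positive, so you cannot infer $v_0\ge 0$ from that; nonnegativity (indeed positivity) of $v_0$ comes from the structure of $I_0$ itself---its nonlinearity is $(u^+)^{2^*}$, so testing $I_0'(v_0)=0$ against $v_0^-$ gives $\|v_0^-\|_{\dot H^\alpha}=0$, which is the content of the paper's Lemma \ref{l4.6}. Also, in this lemma the hypothesis $N>4\alpha$ and the decay estimate are not needed to control the Hardy term of $\bar v_n$ (local $L^2$-boundedness and $|x_n|/r_n\to\infty$ suffice), and $U_\mu$-type tails are irrelevant here since $v_0$ solves the $\mu=0$ equation (\ref{1.4}); their actual role, which your plan misplaces, is to guarantee via the decay (\ref{1.5}) that $v_0\in L^p(\R^N)$ for all $p\ge 2$ (this is (\ref{2.12}), and it needs precisely $2(N-2\alpha)>N$, i.e.\ $N>4\alpha$), whence the subtracted bubble belongs to $H^\alpha(\R^N)$ and tends to $0$ strongly in $L^p$ for $2\le p<2^*$ as in (\ref{x}). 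This integrability, and not merely ``concentration plus escape,'' is what yields $z_n\rightharpoonup 0$ in the nonhomogeneous space $H^\alpha(\R^N)$ and makes the Br\'ezis--Lieb splitting of the $L^2$, Hardy and subcritical terms go through.
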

\begin{proof}
	First, we prove that $v_0$ solves (\ref{1.4}) and
	$I(z_n)=I(v_n)-I_0(v_0)$. Fix a ball $B(0,r)$ and a test
	function $\phi\in C^\infty_0(B(0,r))$.
	Since
$$	\begin{aligned}
\bar v_n\rightharpoonup v_0 \text{  in } \dot{H}^\alpha(\R^N),
	\end{aligned}$$
 it implies
	\begin{equation}\label{j2.13}
	\begin{split}
	&\ \ \ \langle\phi, I'_0( v_0)\rangle+o(1)
	\\&	=\langle\phi, I'_0(\bar v_n)\rangle\\&=c_{N,\alpha}\int_{\R^N}\int_{\R^N}\frac{(\bar v_n(x)-\bar v_n(y))(\phi(x)-\phi(y))}{|x-y|^{N+2\alpha}}
	dxdy-\int_{\R^N}\bigl(\bar v^+_n(x)\bigl)^{{2^*}-1}\phi(x)
	dx\\
	&=c_{N,\alpha}\int_{\R^N}\int_{\R^N}\frac{(\bar v_n(x)-\bar v_n(y))(\phi(x)-\phi(y))}{|x-y|^{N+2\alpha}}
	dxdy-\mu\int_{\R^N}\frac{\bar v_n(x)\phi(x)}{|x+\frac{x_n}{r_n}|^{2\alpha}}dx-\int_{\R^N}\bigl(\bar v^+_n(x)\bigl)^{{2^*}-1}\phi(x)
	dx\\
	& \ \ \ +r_n^{2\alpha}\int_{\R^N} a({r_n x+x_n})\phi(x)\bar v_n(x)
	dx-r_n^{N-\frac{N-2\alpha}{2}q}\int_{\R^N}k(r_n x+x_n)\phi(x) (\bar v^+_n(x))^{q-1}dx+o(1)\\
	&=c_{N,\alpha}\int_{\R^N}\int_{\R^N}\frac{(v_n(x)-v_n(y))(\phi_n(x)-\phi_n(y))}{|x-y|^{N+2\alpha}}
	dxdy-\mu\int_{\R^N}\frac{ v_n(x)\phi_n(x)}{|x|^{2\alpha}}dx-\int_{\R^N}(v^+_n(x))^{{2^*}-1}\phi_n(x)
	dx\\
	& \ \ \ +\int_{\R^N} a(x)\phi_n(x) v_n(x)dx -\int_{\R^N}
	k(x)\phi_n(x)(v^+_n(x))^{q-1} dx+o(1) =o(1)\text{ as }n\rightarrow
	\infty,
	\end{split}
	\end{equation}
where $\phi_n=r_n^{-\frac{N-2\alpha}{2}}\phi(\frac{x-x_n}{r_n})$.
 The last equality in  (\ref{j2.13})  holds since
 $$\int_{\R^N}|\phi_n(x)|^2dx=r_n^{2\alpha}\int_{\R^N}|\phi(x)|^2dx=o(1),$$
	$$\|\phi\|_{\dot{H}^\alpha(\R^N)}=\|\phi_n\|_{\dot{H}^{\alpha}(\R^N)}=\|\phi_n\|_{{H}^{\alpha}(\R^N)}+o(1),\text{ as
	}n\rightarrow \infty.$$
	Thus $v_0$ is a nontrival critical point of $I_0$. By  Lemma \ref{l4.6}, (\ref{1.5}) and the fact $N>4\alpha$, it follows
	\begin{equation}\label{2.12}
	\int_{\R^N}|v_0(x)|^pdx\leq c	\int_{\R^N}\frac{1}{(1+|x|^{N-2\alpha})^p}dx\leq c,\,\, \forall \,\, p\geq 2
	\end{equation}
	which implies that $v_0\in L^2(\R^N)$.
	Let
	$$ z_n(x)=v_n(x)-r_n^{\frac{2\alpha-N}{2}}v_0(\frac{x-x_n}{r_n})\in
	H^{\alpha}(\R^N).$$

	From (\ref{2.12}),\,$v_0\in L^p(\R^N)$ for all $p\in [2,2^*)$. Then it follows
	\begin{equation}\label{x}
	\begin{split}
	\int_{\R^N}|v_0(\frac{x-x_n}{r_n})r_n^{\frac{2\alpha-N}{2}}|^pdx=r_n^{N-p\frac{(N-2\alpha)}{2}}\|v_0\|^p_{L^p(\R^N)}\rightarrow 0 ,\,\,\text{ as }n\rightarrow\infty,\text{ for all }{2\leq p< {2^*}},
	\end{split}
	\end{equation}
	Thus $z_n\rightharpoonup 0 \text{
		in }H^{\alpha}(\R^N)\text{ as }n\rightarrow \infty$. Now we prove
	that $\{z_n\}$ is a Palais-Smale  sequence of $I$ at level $d_1-I_0
	(v_0)$.
	By the Br\'{e}zis-Lieb Lemma and the weak convergence, similar to Lemma \ref{q} in the Appendix, we can prove that
	$$I(z_n)=I(v_n)-I_0(v_0),$$
	$$ \langle I'(z_n),\phi\rangle=o(1)$$ as $n\rightarrow \infty$.
	It completes the proof.
	\end{proof}
\begin{lemma}\label{l2.3}Assume $N>4\alpha,2<q< 2^*, 0<\alpha<1$, $0<\mu <\phi_{\alpha,N}(\frac{N-4\alpha}{2})$.	Let $\{v_n\}\subset H^{\alpha}(\R^N)$ be a Palais-Smale  sequence of $I$ at
	level $d_1$ and $v_n\rightharpoonup 0\text{ in } H^{\alpha}(\R^N)\text{
		as }n\rightarrow \infty$. If there exists a sequence $\{r_n\}\subset
	\R^+, \text{ with } r_n\rightarrow 0$ as $n\rightarrow \infty$ such
	that $\bar v_n(x):=r_n^{\frac{N-2\alpha}{2}}v_n (r_n x)$ converges weakly
	in $\dot{H}^\alpha(\R^N)$ and almost everywhere to some $v_0\in
	\dot{H}^\alpha(\R^N)\text{ as }n\rightarrow \infty$ with $v_0\not\equiv 0$, then $v_0$ solves
	(\ref{1.12}), the sequence
	$z_n(x):=v_n(x)-v_0(\frac{x}{r_n})r_n^{\frac{2\alpha-N}{2}}\rightharpoonup 0 \text{ in } \dot{H}^\alpha (\R^N)$ and $z_n(x)$ is a
	Palais-Smale sequence of $I$ at level $d_1-I_\mu(v_0)$. \end{lemma}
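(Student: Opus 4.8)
The plan is to follow the scheme of Lemma \ref{3.7}, the only structural change being that here the rescaling $\bar v_n(x)=r_n^{(N-2\alpha)/2}v_n(r_n x)$ is centered at the origin rather than along a sequence $x_n$ escaping to infinity. Consequently the singular Hardy weight $|x|^{-2\alpha}$ is left invariant by the rescaling and therefore persists in the limit, so that the bubble $v_0$ is expected to solve the Hardy-critical problem (\ref{1.12}) instead of the pure-critical problem (\ref{1.4}).

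First I would show that $v_0$ is a weak solution of (\ref{1.12}). Fix $\phi\in C_0^\infty(\R^N)$ and set $\phi_n(x)=r_n^{-(N-2\alpha)/2}\phi(x/r_n)$, the rescaling adjoint to the one defining $\bar v_n$. Since $\|\phi_n\|_{\dot{H}^\alpha}=\|\phi\|_{\dot{H}^\alpha}$ and $\|\phi_n\|_{L^2}^2=r_n^{2\alpha}\|\phi\|_{L^2}^2\to 0$, the sequence $\{\phi_n\}$ is bounded in $H^\alpha(\R^N)$, so the Palais-Smale condition gives $\langle I'(v_n),\phi_n\rangle\to 0$. Changing variables $x=r_n\xi$ in $\langle I'(v_n),\phi_n\rangle$, exactly as in (\ref{j2.13}), the Gagliardo bilinear form, the Hardy term and the critical term are scale invariant, while the remaining terms carry positive powers of $r_n$: the $a(x)$-term a factor $r_n^{2\alpha}$ and the $k(x)$-term a factor $r_n^{N-(N-2\alpha)q/2}$, both tending to $0$ because $q<2^*$. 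The crucial difference with (\ref{j2.13}) is that, with no translation, the Hardy term transforms into $\mu\int\frac{\bar v_n\phi}{|x|^{2\alpha}}dx$, which now converges to $\mu\int\frac{v_0\phi}{|x|^{2\alpha}}dx$ rather than vanishing. Passing to the limit using $\bar v_n\rightharpoonup v_0$ in $\dot{H}^\alpha(\R^N)$ (hence in $L^2(|x|^{-2\alpha},\R^N)$ and in $L^{2^*}(\R^N)$, with a.e. convergence for the nonlinearity) then yields $\langle I_\mu'(v_0),\phi\rangle=0$, so $v_0$ solves (\ref{1.12}); since the sequence is positive, $v_0\ge 0$, and the strong maximum principle makes $v_0$ a positive solution, so the pointwise bounds (\ref{1.17}) apply.

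The technical heart of the lemma — and where the hypotheses $N>4\alpha$ and $0<\mu<\phi_{\alpha,N}((N-4\alpha)/2)$ are actually used — is the integrability $v_0\in L^p(\R^N)$ for every $p\in[2,2^*)$, which replaces estimate (\ref{2.12}) of Lemma \ref{3.7}. Using (\ref{1.17}) together with (\ref{1.19}), $v_0$ behaves like $|x|^{-\alpha_\mu}$ near the origin and like $|x|^{-(N-2\alpha-\alpha_\mu)}$ at infinity. Integrability near the origin, $p\,\alpha_\mu<N$, is automatic from $\alpha_\mu<(N-2\alpha)/2$; integrability at infinity for $p=2$ requires $2(N-2\alpha-\alpha_\mu)>N$, i.e. $\alpha_\mu<(N-4\alpha)/2$, which is precisely what $\mu<\phi_{\alpha,N}((N-4\alpha)/2)$ guarantees through the monotone dependence of $\alpha_\mu$ on $\mu$ recorded in (\ref{1.19})--(\ref{1.20jjj}) (and which forces $N>4\alpha$). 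For $2<p<2^*$ the decay at infinity is even faster and near the origin $p\,\alpha_\mu<2^*\alpha_\mu<N$ still holds, so $v_0\in L^p$. I expect this decay bookkeeping to be the main obstacle, since, unlike in Lemma \ref{3.7} where the profile $U_0$ is explicit, one must quantify the Hardy-solution asymptotics (\ref{1.17}) and match them against the constraint on $\mu$.

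Once $v_0\in L^2$ is known, the bubble $V_n(x):=r_n^{(2\alpha-N)/2}v_0(x/r_n)$ lies in $H^\alpha(\R^N)$ with $\|V_n\|_{\dot{H}^\alpha}=\|v_0\|_{\dot{H}^\alpha}$ and $\|V_n\|_{L^p}^p=r_n^{N-(N-2\alpha)p/2}\|v_0\|_{L^p}^p\to 0$ for every $p\in[2,2^*)$, so $V_n\to 0$ strongly in such $L^p$ while concentrating at the origin. The weak convergence $z_n=v_n-V_n\rightharpoonup 0$ in $\dot{H}^\alpha(\R^N)$ follows because $v_n\rightharpoonup 0$ by hypothesis and a concentrating profile converges weakly to zero (testing $V_n$ against $\psi\in C_0^\infty$ and splitting the Fourier integral into $\{|\xi|<\delta\}$ and $\{|\xi|\ge\delta\}$ gives $\langle V_n,\psi\rangle_{\dot{H}^\alpha}\to 0$). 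It then remains to prove $I(z_n)=I(v_n)-I_\mu(v_0)+o(1)$ and $I'(z_n)\to 0$ in $(H^\alpha)^*$, which I would carry out exactly as in Lemma \ref{q} of the Appendix: the quadratic and Hardy cross-terms vanish because $\langle \bar v_n,v_0\rangle\to\|v_0\|_{\dot{H}^\alpha}^2$ and by Br\'ezis--Lieb with the weight $|x|^{-2\alpha}$; the critical term splits by the Br\'ezis--Lieb lemma after rescaling, $\int(v_n^+)^{2^*}=\int(z_n^+)^{2^*}+\int(v_0^+)^{2^*}+o(1)$; and the $a(x)$- and $k(x)$-contributions of $V_n$ vanish thanks to $V_n\to 0$ in $L^2$ and in $L^q$. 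Collecting the surviving scale-invariant terms gives exactly $I_\mu(v_0)$, which completes the proof.
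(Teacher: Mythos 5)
Your proposal follows essentially the same route as the paper's own proof: the adjoint-rescaled test functions $\phi_n=r_n^{-\frac{N-2\alpha}{2}}\phi(x/r_n)$ with scale-invariant Gagliardo/Hardy/critical terms and vanishing $a$- and $k$-terms (the paper's computation (\ref{j62.21})), the decay bounds (\ref{1.17}) combined with $\mu<\phi_{\alpha,N}(\frac{N-4\alpha}{2})$ to obtain $v_0\in L^p(\R^N)$ for $2\le p<2^*$ (the paper runs the identical bookkeeping in terms of $\eta_\mu$ rather than $\alpha_\mu$, equivalent via (\ref{1.19})), the rescaled-bubble $L^p$-smallness giving $z_n\rightharpoonup 0$, and the Br\'ezis--Lieb splitting as in Lemma \ref{q}. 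One small correction: the lemma does not assume the Palais--Smale sequence is positive, so the nonnegativity of $v_0$ needed to invoke (\ref{1.17}) should be derived as in the paper's Lemma \ref{l4.6} (testing $I_\mu'(v_0)$ against $v_0^-$, which works because $I_\mu$ only involves $(u^+)^{2^*}$), not from positivity of $v_n$.
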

\begin{proof}
	First, we prove that $v_0$ solves (\ref{1.12}) and
	$I(z_n)=I(v_n)-I_\mu(v_0)$. Fix a ball $B(0,r)$ and a test
	function $\phi\in C^\infty_0(B(0,r))$.
	Since
	\begin{equation}\label{2.2j}
	\bar v_n\rightharpoonup v_0 \text{  in } \dot{H}^\alpha(\R^N),
	\end{equation}
 it implies
	\begin{equation}\label{j62.21}
	\begin{split}
	&\ \ \ \langle\phi, I'_\mu( v_0)\rangle+o(1)
	\\&	=\langle\phi, I'_\mu(\bar v_n)\rangle\\&=c_{N,\alpha}\int_{\R^N}\int_{\R^N}\frac{(\bar v_n(x)-\bar v_n(y))(\phi(x)-\phi(y))}{|x-y|^{N+2\alpha}}dxdy-\mu\int_{\R^N}\frac{\bar v_n(x)\phi(x)}{|x|^{2\alpha}}dx
	-\int_{\R^N}\bigl(\bar v^+_n(x)\bigl)^{{2^*}-1}\phi(x)
	dx\\
	&=c_{N,\alpha}\int_{\R^N}\int_{\R^N}\frac{(\bar v_n(x)-\bar v_n(y))(\phi(x)-\phi(y))}{|x-y|^{N+2\alpha}}
	dxdy-\mu\int_{\R^N}\frac{\bar v_n(x)\phi(x)}{|x|^{2\alpha}}dx-\int_{\R^N}\bigl(\bar v^+_n(x)\bigl)^{{2^*}-1}\phi(x)
	dx\\
	& \ \ \ +r_n^{2\alpha}\int_{\R^N} a({r_n x})\phi(x)\bar v_n(x)
	dx-r_n^{N-\frac{N-2\alpha}{2}q}\int_{\R^N}k(r_n x)\phi(x) (\bar v^+_n(x))^{q-1}dx+o(1)\\
	&=c_{N,\alpha}\int_{\R^N}\int_{\R^N}\frac{(v_n(x)-v_n(y))(\phi_n(x)-\phi_n(y))}{|x-y|^{N+2\alpha}}
	dxdy-\mu\int_{\R^N}\frac{ v_n(x)\phi_n(x)}{|x|^{2\alpha}}dx-\int_{\R^N}(v^+_n(x))^{{2^*}-1}\phi_n(x)
	dx\\
	& \ \ \ +\int_{\R^N} a(x)\phi_n(x) v_n(x)dx -\int_{\R^N}
	k(x)\phi_n(x)(v^+_n(x))^{q-1} dx+o(1) =o(1)\text{ as }n\rightarrow
	\infty,
	\end{split}
	\end{equation}
	where $\phi_n=r_n^{-\frac{N-2\alpha}{2}}\phi(\frac{x}{r_n})$.
	The last equality in  (\ref{j62.21})  holds since
	$$\int_{\R^N}|\phi_n(x)|^2dx=r_n^{2\alpha}\int_{\R^N}|\phi(x)|^2dx=o(1),$$
	$$\|\phi\|_{\dot{H}^\alpha(\R^N)}=\|\phi_n\|_{\dot{H}^{\alpha}(\R^N)}=\|\phi_n\|_{{H}^{\alpha}(\R^N)}+o(1),\text{ as
	}n\rightarrow \infty.$$
	Thus $v_0$ is a nontrival critical point of $I_\mu$.  Noting the fact $ N>4\alpha$, $\mu <\phi_{\alpha,N}(\frac{N-4\alpha}{2})$ and $\phi_{\alpha,N}$ is a strictly increasing, it follows 
	$$\begin{aligned}
	\eta_\mu>\frac{2\alpha}{N-2\alpha},\,\, (1+\eta_\mu)\frac{(N-2\alpha)p}{2}\geq (1+\eta_\mu)(N-2\alpha)>N,\,\, \forall \,\, p\geq 2
	\end{aligned}$$
	then by Lemma \ref{l4.6} and (\ref{1.17}), it follows
	\begin{equation}\label{2.10}
	\int_{\R^N}|v_0(x)|^pdx\leq c,\,\, \forall \,\, p\geq 2
	\end{equation}
	which implies that $v_0\in L^2(\R^N)$.
	Let
	$$ z_n(x)=v_n(x)-r_n^{\frac{2\alpha-N}{2}}v_0(\frac{x}{r_n})\in
	H^{\alpha}(\R^N).$$
	
	From (\ref{2.10}) and $v_0\in L^p(\R^N)$ for all $p\in [2,2^*)$, it follows
	\begin{equation}\label{x1}
	\begin{split}
	\int_{\R^N}|v_0(\frac{x-x_n}{r_n})r_n^{\frac{2\alpha-N}{2}}|^pdx=r_n^{N-p\frac{(N-2\alpha)}{2}}\|v_0\|^p_{L^p(\R^N)}\rightarrow 0 ,\,\,\text{ as }n\rightarrow\infty,\text{ for all }{2\leq p< {2^*}},
	\end{split}
	\end{equation}
	Thus $z_n\rightharpoonup 0 \text{
		in }H^{\alpha}(\R^N)\text{ as }n\rightarrow \infty$. Now we prove
	that $\{z_n\}$ is a Palais-Smale  sequence of $I$ at level $d_1-I_\mu
	(v_0)$.
	By the Br\'{e}zis-Lieb Lemma and the weak convergence, similar to Lemma \ref{q} in the Appendix, we can prove that
	$$I(z_n)=I(v_n)-I_\mu(v_0),$$
	$$ \langle I'(z_n),\phi\rangle=o(1)$$ as $n\rightarrow \infty$.
	It completes the proof.
\end{proof}

 {\bf Proof of Theorem \ref{1.1}.}
 By Lemma \ref{3.5} in the appendix, we can assume that $\{u_n\}$ is
bounded. Up to a subsequence, $\text{ let }n\rightarrow \infty$, we
assume that
\begin{gather}
\label{t1.11}u_n\rightharpoonup u \text{ in } H^{\alpha}(\R^N),\\
\label{t1.12}u_n\rightarrow u \text{ in } L^p_{\mathrm{loc}}(\R^N)\,\text{ for }2\leq
p<{2^*},\\
\label{t1.13}u_n\rightarrow u \text{ a.e. in } \R^N.
\end{gather}
Denote $v_n(x)=u_n(x)-u(x)$, then $\{v_n\}$ is
a Palais-Smale  sequence of $I$ and $v_n\rightharpoonup0$  in
$H^{\alpha}(\R^N)$ and
\begin{gather}
\label{t2.11}v_n\rightharpoonup 0 \text{ in } H^{\alpha}(\R^N),\\
\label{t2.12}v_n\rightarrow 0 \text{ in } L^p_{\mathrm{loc}}(\R^N)\,\text{ for }2\leq
p<{2^*},\\
\label{t2.13}v_n\rightarrow 0 \text{ a.e. in } \R^N.
\end{gather}
 Then by Lemma \ref{q} we know that
\begin{gather}
\label{4.1}I(v_n)=I(u_n)-I(u)+o(1),\text{ as }n\rightarrow \infty,\\
\label{4.2}I'(v_n)=o(1),\text{ as }n\rightarrow \infty,\\
\label{4.3}\|v_n\|_{H^{\alpha}(\R^N)}=\|u_n\|_{H^{\alpha}(\R^N)}-\|u\|_{H^{\alpha}(\R^N)}+o(1),\text{
as }n\rightarrow \infty.
\end{gather}

Without loss of generality, we may assume that
$$\|v_n\|^2_{H^{\alpha}(\R^N)}\rightarrow l>0\text{ as
}n\rightarrow\infty.$$ In fact if $l=0$, Theorem 1.1 is proved for $l_1=0,l_2=0,l_3=0$.

{\bf Step 1}: getting rid of the blowing up bubbles caused
by unbounded domains.

Suppose there exists a constant $0<\delta <\infty$ such that
\begin{equation}\label{4.5}
(\int_{\R^N}(v_n^+(x))^{q}dx)^{\frac{2}{q}}\geq \delta >0,\text{ for } 2<q<2^*.
\end{equation}
By  interpolation inequality, it follows
\[\|v_n\|_{L^q}\leq \|v_n\|_{L^2}^\lambda \|v_n\|^{1-\lambda}_{L^{2^*}}, \text{ for } 2<q<2^*\]
where $0<\lambda<1$. Thus there exists a $\tilde{\delta}>0$ such that
\[\|v_n\|_{L^2}^2\geq \tilde{\delta}>0.\]
By Lemma \ref{3.1}, there exists a subsequence still denoted by $\{v_n\}$, such that one
of the following two cases occurs.

i) Vanish occurs.
$$\displaystyle\forall \,0< R<\infty, \sup_{y\in\R^N}\int_{B(y,R)}|v_n(x)|^2dx\rightarrow 0\text{ as }n\rightarrow \infty.$$
By Lemma \ref{3.2}, (\ref{4.10}) and Sobolev inequality,  it follows $$\int_{\R^N}(v_n^+(x))^{q}dx\rightarrow 0\text{ as
}n\rightarrow \infty,\,\,\forall\,2< q<2^*,$$ which contradicts
(\ref{4.5}).

ii) Nonvanish occurs.

There exist $\beta
>0,\,0<\bar R<\infty,\,\{y_n\}\subset\R^N \text{ such that }$
\begin{equation}\label{4.4}
\liminf_{n\rightarrow \infty}\int_{y_n+B_{\bar R}}
|v_n(x)|^2dx\geq \beta
>0.
\end{equation}

We claim  that $|y_n|\rightarrow\infty$ as
$n\rightarrow\infty$. Otherwise, if there exists a constant $M>0$ such that $|y_n|\leq M$, then we can choose a $R_2>0$ large enough such that
\begin{equation}\label{2.30}
\int_{y_n+B_{\bar R}}
|v_n(x)|^2dx\leq \|v_n\|_{L^2 (B(0,R_2))}^2\rightarrow 0 \text{ as } n \rightarrow \infty,
\end{equation}
which contradicts (\ref{4.4}).

To proceed, we first construct the Palais-Smale sequences of $I^\infty$.

Denote $\bar
v_n(x)=v_n(x+y_n)$.
Since $\|\bar v_n\|_{H^{\alpha}(\R^N)}=\|v_n\|_{H^{\alpha}(\R^N)}\leq c$, without
loss of generality, we assume that $\text{ as } n\rightarrow
\infty$,
\begin{equation}\label{2.46j}
\begin{aligned}
\bar v_n\rightharpoonup v_0&\text{ in }H^{\alpha}(\R^N),\\
\bar v_n\rightarrow v_0& \text{ in } L^p_{\mathrm{loc}}(\R^N),\text{ for any }
1< p < {2^*}.
\end{aligned}
\end{equation}
Then $\forall \phi\in C^\infty_0(B(0,r))\text{ as }
n\rightarrow \infty$,
\begin{equation}\label{2.43j}
\begin{aligned}\int_{\R^N}\frac{
	\bar v^+_n(x) \phi(x)}{|x+y_n|^{2\alpha}}dx&=\int_{B(0,r)}
\frac{  \bar v^+_n(x)\phi(x)}{|x+y_n|^{2\alpha}}dx
\\& \leq\frac{2}{|y_n|^{2\alpha}}\int_{\R^N}|\bar v_n(x)\phi|dx+o(1)
\\&\leq \frac{2}{|y_n|^{2\alpha}}\|\phi\|_{H^\alpha}\|\bar v_n\|_{H^\alpha}+o(1)
\\&\leq \frac{c}{|y_n|^{2\alpha}}+o(1)=o(1).
\end{aligned}
\end{equation}
Similarly we have \begin{equation}\label{4.17}\int_{\R^N}\frac{ (\bar v^+_n(x))^2}{|x+y_n|^{2\alpha}}dx=o(1)\text{ as }
n\rightarrow \infty.
\end{equation}
Since $v_n\rightharpoonup 0 \text{ in
}H^{\alpha}(\R^N)$ and $\dis\lim_{n\rightarrow\infty}a(x+y_n)=\bar a$,
we have as $n\rightarrow\infty$,
$$
o(1)= \int_{\R^N}a(x)v_n(x)\phi_n(x) dx=\int_{\R^N}\bar a \bar v_n(x)\phi(x)
dx+\int_{\R^N}[a(x+y_n)-\bar a] \bar v_n(x)\phi(x) dx $$
and $$|\int_{\R^N}[a(x+y_n)-\bar a] \bar v_n(x)\phi(x) dx|\leq c(\int_{\R^N}|a(x+y_n)-\bar
a
|^2\phi(x)^2dx)^{1/2}=o(1),$$
that is,
\begin{equation}\label{s3}
\int_{\R^N}\bar a \bar v_n(x)\phi(x) dx=o(1)=\int_{\R^N}a(x)v_n(x)\phi_n(x)
dx\text{ as } n\rightarrow \infty.
\end{equation}
Similarly we have
\begin{equation}\label{s4}
\int_{\R^N} k(x) (v_n^+(x))^{q-1} \phi_n(x) dx=\int_{\R^N}\bar k (\bar
v^+_n(x))^{q-1}\phi(x) dx=o(1)\text{ as } n\rightarrow \infty.
\end{equation}
Recall that $v_n$ is a Palais-Smale
sequence of $I$, by (\ref{2.46j}) and (\ref{4.17})-(\ref{s4}) we have
\begin{equation}o(1)=\langle I'(v_n),\phi_n\rangle=\langle {I^\infty}' (\bar
v_n),\phi\rangle+o(1)=\langle {I^\infty}' (
v_0),\phi\rangle+o(1),\text{ as
}n\rightarrow \infty.\end{equation}
This shows
that 
$v_0$ is a weak solution of
(\ref{(2.1)}).


We claim that $v_0\not\equiv 0$.
From (\ref{4.5}), we may assume that there exists a sequence $\{y_n\}$
satisfying (\ref{4.4}) and
\begin{equation}\label{z3}\int_{B(y_n,R)}(v^+_n(x))^qdx=b+o(1)>0,\text{ as
}n\rightarrow \infty,
\end{equation} where $b>0$ is a constant.
If $v_0\equiv 0$, we have $$\int_{B(0,R)}(\bar v^+_n(x))^qdx=\int_{B(y_n,R)}
(v^+_n(x))^qdx=o(1)\text{ as } n\rightarrow \infty\text{ for }0<R<\infty$$ which
contradicts (\ref{z3}).


Denote $z_n(x)= v_n(x)-v_0(x-y_n)$. Since \begin{align*}
I(v_n)&=\frac{1}{2}\int_{\R^N}\Bigl (|(-\Delta)^{\alpha/2}v_n(x)|^2+a(x)|v_n(x)|^2-\mu\frac{|v_n(x)|^2}{|x|^{2\alpha}}
\Bigl )dx
\\&\ \ -\frac{1}{{2^*}}\int_{\R^N}(v_n^+(x))^{{2^*}}dx-\frac{1}{q}\int_{\R^N}k(x)(v_n^+(x))^{q}
dx\\
&=\frac{1}{2}\int_{\R^N} \Bigl (|(-\Delta)^{\alpha/2}\bar v_n(x)|^2+a(x+y_n)|\bar
v_n(x)|^2-\mu\frac{|\bar v_n(x)|^2}{|x+y_n|^{2\alpha}} \Bigl )dx-\frac{1}{{2^*}}\int_{\R^N}(\bar
	v_n^+(x))^{{2^*}}dx\\
&\ \ \ -\frac{1}{q}\int_{\R^N}k(x+y_n)(\bar v_n^+(x))^q dx\\
&=\frac{1}{2}\int_{\R^N}\Bigl (|(-\Delta)^{\alpha/2}\bar v_n(x)|^2+\bar a|\bar
v_n(x)|^2 \Bigl )dx-\frac{1}{q}\int_{\R^N}\bar k(\bar v_n^+(x))^q dx-\frac{1}{{2^*}}\int_{\R^N}(\bar
v_n^+(x))^{{2^*}}dx+o(1)\\
&=I^\infty(\bar v_n)+o(1),
\end{align*}
where the last equality but one is a result of (\ref{4.17}), therefore, as $n\rightarrow \infty$,
\begin{gather}\label{4.20}\|z_n\|_{H^{\alpha}(\R^N)}=\|\bar
v_n\|_{H^{\alpha}(\R^N)}-\|v_0\|_{H^{\alpha}(\R^N)}+o(1)=\|
v_n\|_{H^{\alpha}(\R^N)}-\|v_0\|_{H^{\alpha}(\R^N)}+o(1), \\
\label{4.21} I(z_n)=I^\infty(\bar v_n)-I^\infty(v_0)+o(1)=I(v_n)-I^\infty
(v_0)+o(1).
\end{gather}
Hence
$z_n\rightharpoonup0\text{ in }H^{\alpha}(\R^N)\text{ as } n\rightarrow
\infty$,
and $z_n$ is a Palais-Smale sequence of
$I$. From (\ref{4.10}) in Lemma \ref{l4.5}, it follows 
$ \|v_0^-\|_{H^\alpha}=0$, that is $v_0\geq 0$ a.e. in $\R^N$.
Then by Brezis-Lieb Lemma and (\ref{4.10}), there exists a constant $c>0$ such that
\begin{equation}\label{2.56j}
\begin{aligned}
\int_{\R^N}(z_n^+(x))^qdx
= \int_{\R^N}(v_n^+(x))^qdx-\int_{\R^N}(v_0^+(x))^qdx+o(1)\leq  \int_{\R^N}(v_n^+(x))^qdx-c
\end{aligned}\end{equation}
where   the last inequality follows from the fact $v_0\not\equiv 0$.
If $\|z_n\|_{L^q(\R^N)}\rightarrow \delta_2>0\text{ as
}n\rightarrow\infty$, from (\ref{2.56j})
and the boundedness of $\|v_n\|_{L^q}$, then one can repeat Step 1 for finite times ($l_1$ times).
Thus we  obtain a new Palais-Smale sequence of $I$, without loss of generality still denoted by $v_n$, such that
\begin{equation}\label{2.45}
d=I(u)+I(v_n)+\sum^{l_1}_{k=1}I^\infty(u_k)+o(1),
\end{equation}
\begin{equation}\label{2.46}
v_n(x)=u_n(x)-u(x)-\sum^{l_1}_{k=1}u_k(x-y_n^k),\text{ with } y_n^k\rightarrow \infty,
\end{equation}
\begin{equation}\label{2.42}
\|v_n^+\|_{L^q(\R^N)}\rightarrow 0
\end{equation}
as $n\rightarrow\infty$.

{\bf Step 2:}\, Getting rid of the blowing up bubbles caused by the  critical terms.

Suppose there exists $0<\delta<\infty$ such that
 \begin{equation}\label{4.6}
\inf_{n\in \N}\int_{\R^N}\bigl (v_n^+(x)\bigl )^{2^*}dx\geq \delta>0,\,\,\text{ for
some } \,0<R<\infty.
\end{equation}

It follows from Lemma \ref{l6} that there exist  two sequences $\{r_n\}\subset \R^+$ and $\{x_n\}\subset \R^N$,
such that \begin{equation}\label{2.25}\bar v_n(x)\rightharpoonup v_0\not\equiv 0\text{ in } \dot{H}^{\alpha}(\R^N),\end{equation}
	where \begin{equation}\label{2.3jj}
	\begin{aligned}
	\bar v_n(x)=\begin{cases}
	r_n^{\frac{N-2\alpha}{2}}v_n(r_n x) &\text{ when }\frac{x_n}{r_n} \text{ is bounded, }\\
	r_n^{\frac{N-2\alpha}{2}}v_n(r_n x+x_n) &\text { when } |\frac{x_n}{r_n}|\rightarrow \infty.
	\end{cases}
	\end{aligned}
	\end{equation}

Now we claim that $r_n\rightarrow 0 \text{ as } 	n\rightarrow \infty.$
In fact there exists a $R_1>0$ such that
\begin{equation}\label{jt3.8}
	\int_{B(0,R_1)}|v_0(x)|^pdx=\delta_1>0, \text{ for }2\leq p<2^*.
\end{equation}
From the Sobolev compact embedding, (\ref{t1.12}), (\ref{2.25}) and (\ref{jt3.8}), we have that for all $r>0$,
\[ v_n\rightarrow 0\text{ in } L^p(B(0,r)) \text{ for all }2\leq p<2^*,\]
\[ \bar v_n\rightarrow v_0
\text{ in } L^p(B(0,r)) \text{ for all }2\leq p<2^*,\]
\begin{equation}\label{t1.18}
\begin{aligned}
0&\neq\|v_0\|_{L^2(B(0,R_1))}^2+o(1)\\&=\int_{B(0,R_1)}|\bar v_n(x)|^2dx\\&=\begin{cases}
r_n^{-2\alpha}\int_{B(0,r_nR_1)}|v_n(x)|^2dx, &\text{ if }\frac{x_n}{r_n} \text{ is bounded, }\\
r_n^{-2\alpha}\int_{B(x_n,r_nR_1)}|v_n(x)|^2dx &\text { if } |\frac{x_n}{r_n}|\rightarrow \infty.
\end{cases}
\end{aligned}
\end{equation}

If $|\frac{x_n}{r_n}|\rightarrow \infty$, then there exists a constant $\bar c $ such that
\begin{equation}\label{2.47}
\begin{aligned}
\ \ \ 0<\bar c& <r_n^{-2\alpha}\int_{B(x_n,r_nR_1)}|v_n(x)|^2dx \\&\leq c r_n^{-2\alpha}\bigl(\int_{B(x_n,r_nR_1)}|v_n(x)|^qdx\bigl)^{2/q}
(w_N(r_nR_1)^N)^{1-\frac{2}{q}}\\
&\leq c r_n^{N(1-\frac{2}{q})-2\alpha}\bigl(\int_{\R^N}|v_n(x)|^qdx\bigl)^{2/q}
\end{aligned}
\end{equation}
Then from (\ref{2.42}) (\ref{2.47}) and the fact $q<2^*$,   it follows that $r_n\rightarrow 0$. Similarly, if $\frac{x_n}{r_n}$ is bounded, we also have that $r_n\rightarrow 0$.



For the case that $\frac{x_n}{r_n}$ is bounded and $\bar v_n(x)=
	r_n^{\frac{N-2\alpha}{2}}v_n(r_n x)$, define $z_n(x) =v_n(x)-v_0(\frac{x}{r_n})r_n^{\frac{2\alpha-N}{2}}$. It follows from Lemma
 \ref{3.7} that $\{z_n\}$ is a Palais-Smale sequence of $I$
 satisfying
 \begin{eqnarray}
 I(z_n)=I(v_n)-I_\mu(v_0)+o(1), \text { as } n\rightarrow\infty. 
 \end{eqnarray}
 and   $ z_n \rightharpoonup 0 $ in $H^\alpha (\R^N)$. 
Since $v_0$ satisfies (\ref{1.12}), from Lemma \ref{l4.6}, (\ref{1.5j})  and (\ref{1.11j}) there exists $\va_1>0$ such that
 \begin{equation}\label{2.25j}
 v_0(x)=\va_1^{\frac{2\alpha-N}{2}}U_\mu(\frac{x}{\va_1}), I_\mu(v_0)=D_\mu.
 \end{equation}
 Let $R_n^1=r_n\va_1$, from (\ref{2.25j}), it follows
 \begin{equation}
r_n^{\frac{2\alpha-N}{2}} v_0(\frac{x}{r_n})
={(R^1_n)}^{\frac{2\alpha-N}{2}}U_\mu(\frac{x}{R_n^1})=U_\mu^{R^1_n}(x),
 \end{equation}
 with $R^1_n\rightarrow 0$. Then from (\ref{4.1})  it follows
 \begin{equation}
 \begin{aligned}
 z_n(x)=v_n(x)-U_\mu^{R^1_n}(x),\\ I(z_n)=I(v_n)-D_\mu+o(1)
 \end{aligned}\end{equation}
  with $R^1_n\rightarrow 0$.
From (\ref{4.10}),  we have that $z_n\geq 0, \text{ a.e.  in } \R^N$. From Lemma \ref{l4.8}, let $a=v_n,b=U_\mu^{R^1_n}$, it follows
 \begin{equation}\label{3.18}
 \begin{aligned}
 &\int_{\R^N}\bigl (z_n^+(x)\bigl )^{2^*}dx=\int_{\Omega}\bigl (z_n(x)\bigl )^{2^*}dx\\ \leq &\int_{\Omega}\bigl (v_n(x)\bigl )^{2^*}-(U_\mu^{R^1_n}(x))^{2^*}dx\\=&\int_{\Omega}\bigl (v_n^+(x)\bigl )^{2^*}dx-C
\\ \leq & \int_{\R^N}\bigl (v_n^+(x)\bigl )^{2^*}dx-C
 \end{aligned}
 \end{equation}
 where $\Omega=\{x|z_n(x)\geq 0\}\bigcap \R^N$.

 For the case that $|\frac{x_n}{r_n}|\rightarrow \infty$ and $\bar v_n(x)=
 r_n^{\frac{N-2\alpha}{2}}v_n(r_n x+x_n)$, define $z_n(x) =v_n(x)-v_0(\frac{x-x_n}{r_n})r_n^{\frac{2\alpha-N}{2}}$. It follows from Lemma
 \ref{l2.3} that $\{z_n\}$ is a Palais-Smale sequence of $I$
 satisfying
 \begin{eqnarray}
 \label{4.12}I(z_n)=I(v_n)-I_0(v_0)+o(1), \text { as } n\rightarrow\infty. 
 \end{eqnarray}
 and $ z_n \rightharpoonup 0 $ in $H^\alpha (\R^N)$.
 Since $v_0$ satisfies (\ref{1.4}), from Lemma \ref{l4.6}, (\ref{1.5j})  and (\ref{1.10j}) there exists $\va_1>0$ such that
 \begin{equation}\label{2.2jj}
 v_0(x)=\va_1^{\frac{2\alpha-N}{2}}U_0(\frac{x-y}{\va_1}), I_0(v_0)=D_0.
 \end{equation}
 Let $\bar R_n^1=r_n\va_1$ and $ x_n^1=y+\va_1x_n$, from (\ref{2.2jj}), it follows
 \begin{equation}\label{2.26j}
 r_n^{\frac{2\alpha-N}{2}} v_0(\frac{x}{r_n})
 ={(\bar R^1_n)}^{\frac{2\alpha-N}{2}}U_0(\frac{x-x^1_n}{\bar R_n^1})=U_0^{\bar R^1_n,x_n^1}(x),
 \end{equation}
 with $\bar R^1_n\rightarrow 0$. Then from (\ref{4.1})  it follows
 \begin{equation}
 \begin{aligned}
 z_n(x)=v_n(x)- U_0^{\bar R^1_n,  x_n^1}(x),\\ I(z_n)=I(v_n)-D_0+o(1)=I(u_n)
 \end{aligned}\end{equation}
 with $\bar R^1_n\rightarrow 0$.
 Similar to (\ref{3.18}), it follows
 \begin{equation}
 \begin{aligned}
 \int_{\R^N}\bigl (z_n^+(x)\bigl )^{2^*}dx \leq  \int_{\R^N}\bigl (v_n^+(x)\bigl )^{2^*}dx-C
 \end{aligned}
 \end{equation}

If still there exists a $\bar \delta>0, \text{ such that }$
\[\dis\int_{\R^N}\bigl (z_n^+(x)\bigl )^{2^*}dx\geq \bar \delta
 >0,\] then repeat the previous argument.
 From (\ref{3.18}) and the fact \[\int_{\R^N}\bigl (z_n^+(x)\bigl )^{2^*}dx\leq \|v
 _n\|_{H^\alpha}^{2^*}\leq c,\] we deduce that the iteration must stop after
 finite times.
 That is to see, there exist nonnegative constants $l_2,l_3$ and  a new Palais-Smale sequence of $I$,
 (without loss of generality) denoted by $\{v_n\}$, such that as $ n\rightarrow \infty$,
 \begin{equation}\label{2.30j}
 \begin{aligned}
 d&=I(v_n)+I(u)+\sum^{l_1}_{k=1}I^\infty(u_k)+l_2 D_\mu+l_3D_0,\,\,\,\\
 v_n(x)&=u_n(x)-u(x)-\sum^{l_1}_{k=1}u_k-\sum^{l_2}_{j=1}U_\mu^{R^j_n}(x)+\sum^{l_3}_{i=1} U_0^{\bar R^i_n,  x_n^i}(x)
 \end{aligned}
 \end{equation}
 with $ R^j_n\rightarrow 0$, $\bar R^i_n\rightarrow 0$ and $| x_n^i/\bar R_n^i|\rightarrow \infty$.
 \begin{equation}\label{4.15}\int_{\R^N}(v_n^+)^{2^*} dx=o(1)
 ,\,\,\,\|v_n\|_{{L^q}(\R^N)}\rightarrow 0 
 \end{equation}
 and
 \begin{equation}\label{4.15jj}v_n\rightharpoonup 0\text{ in } H^{\alpha}(\R^N).
 \end{equation}

Then from the fact $<I'(v_n),v_n>=o(1)$, it follows
\begin{equation}\label{2.58}
\begin{aligned}\|v_n\|^2_{H^\alpha(\R^N)}&\leq c\int_{\R^N}(| \bigl (-\Delta )^{\alpha/2}v_n(x)|^2+a(x)|v_n(x)|^2-\mu\frac{(v_n^+(x))^2}{|x|^{2\alpha}}\bigl )dx\\&=c\bigl(\int_{\R^N}k(x)(v_n^+(x))^{q}dx+ \int_{\R^N}\bigl (v_n^+(x)\bigl )^{2^*}dx\bigl ) \rightarrow 0
\end{aligned}
\end{equation} as $n\rightarrow \infty$. From (\ref{2.58}), it gives that
\begin{equation}\label{2.59}
I(v_n)=o(1).
\end{equation}
From (\ref{2.30j})-(\ref{2.59}), the proof of Theorem 1.1 is complete.

\section{Proof of Theorem 1.2}
Now we are ready to prove Theorem 1.2 by Mountain Pass Theorem \cite{BN} and
Theorem 1.1.

 {\bf Proof of Theorem \ref{b}:}
From
\begin{align*}
I (tu)=&\frac{t^2}{2}\Bigl[\int_{\R^N}|(-\Delta)^{\alpha/2}
u(x)|^2dx-\mu\int_{\R^N}\frac{(u^+(x))^2}{|x|^{2\alpha}}dx+\int_{\R^N}a(x)|u(x)|^2
dx\Bigl]\\
&-\frac{|t|^{{2^*}}}{{2^*}}\int_{\R^N}{(u^+(x))^{{2^*}}}dx-\frac{
|t|^q}{q}\int_{\R^N}k(x)(u^+(x))^q dx,
\end{align*}
we deduce that for a fixed $u\not\equiv 0$ in $H^{\alpha}(\R^N)$, $I
(tu)\rightarrow -\infty$ if $t\rightarrow \infty$. Since
 $$
\int_{\R^N}(u^+(x))^q dx\leq
C\|u\|^q_{H^{\alpha}({\R^N})},\,\,\,\int_{\R^N}{(u^+(x))^{{2^*}}} dx\leq
C\|u\|^{{2^*}}_{H^{\alpha}({\R^N})},
 $$
 we have
$$
I(u)\geq
c\|u\|_{H^{\alpha}(\R^N)}^2-C(\|u\|_{H^{\alpha}(\R^N)}^q+\|u\|_{H^{\alpha}(\R^N)}^{{2^*}}).
$$
Hence, there exists $r_0>0$  small such that $I(u)\Bigl|_{\pa
B(0,r_0)}\geq\rho>0$ for $q,\,{2^*}>2$.

As a consequence,  $I(u)$ satisfies the geometry structure of
Mountain-Pass Theorem. Now define
$$ c^*=:\inf_{\gamma \in \Gamma} \sup_{t
\in [0,1]}I(\gamma (t)),$$ where $ \Gamma=\{\gamma \in
C([0,1],H^{\alpha}(\R^N)):\gamma(0)=0,\gamma(1)=\psi_0\in H^{\alpha}(\R^N)\}$ with $I(t\psi_0)\leq 0$ for all $t\geq 1$.

To complete the proof of Theorem 1.2, we need to verify that
$I(u)$ satisfies the local Palais-Smale conditions. According to
Remarks 1),
 we only need to verify that
\begin{equation}\label{4.28}
c^*<\min\{\frac{\alpha}{N}S_{\alpha,\mu}^{\frac{N}{2\alpha}},\frac{\alpha}{N}S_{\alpha}^{\frac{N}{2\alpha}},
J^\infty\}=\min\{\frac{\alpha}{N}S_{\alpha,\mu}^{\frac{N}{2\alpha}},
J^\infty\}.
\end{equation}  

 Set $\dis v_\va(x)=\frac{U_\mu^\va(x)}{(\int_{\R^N}{|U_\mu^\va(x)|^{{2^*}}}dx)^{1/{2^*}}}$. We
 claim
\begin{equation}\label{4.29}
\max_{t>0}I(tv_\va)<\frac{\alpha}{N}S_{\alpha,\mu}^{\frac{N}{2\alpha}}.
\end{equation}  In
fact,  from (\ref{1.5}) it is easy to calculate  the following estimates
\begin{eqnarray}
\|v_\va\|_{\dot{H}^\alpha(\R^N)}^2=S_{\alpha,\mu}, \label{jt3.19}
\end{eqnarray}
\begin{eqnarray}
\dis
\int_{\R^N}(v_\va(x))^2dx= c\va^{2\alpha}\|U_\mu\|_{L^2}^2=
O(\va^{2\alpha}), \text{ for }N> 4\alpha, \mu <\phi_{\alpha,N}(\frac{N-4\alpha}{2}),
 \label{jt3.20}
\end{eqnarray}
\begin{eqnarray}
\int_{\R^N}(v_\va(x))^qdx= O(\va^{\frac{(2\alpha-N)q}{2}+N}).\label{jt3.21}
\end{eqnarray}
 Since $2^*>q>2$ we
 have
 \begin{equation}\label{l}
  O(\va^{2\alpha})=o(\va^{\frac{(2\alpha-N)q}{2}+N}).\,\,
 \end{equation}
 Denote
$t_\va$ the attaining point of $\dis\max_{t>0}I(tv_\va)$, similar to the proof of Lemma 3.5 in \cite{DG} we can
prove that $t_\va$ is uniformly bounded.
In fact, we consider the function
\begin{equation}
\begin{split}
h(t)&=I \left(tv_\va\right)\\&=\frac{t^2}{2}(\|(-\Delta)^{\alpha/2}v_\va\|^2_{L^2(\R^N)}-\mu\int_{\R^N}\frac{v_\va^2}{|x|^{2\alpha}}dx+\int_{\R^N}a(x) (v_\va(x))^2 dx)\\& \ \ \ \ -\frac{t^{{2^*}}}{{2^*}}\int_{\R^N}
{(v_\va(x))^{{2^*}}}dx-\frac{t^{{q}}}{{q}}
\int_{\R^N}(k(x)v_\va(x))^qdx
\\& \geq
\frac{ct^2}{2}\|v_\va\|^2_{H^\alpha(\R^N)}-\frac{Ct^{{2^*}}}{{2^*}}\|v_\va\|_{H^\alpha(\R^N)}^{2^*}-\frac{Ct^{{q}}}{{q}}\|v_\va\|_{H^\alpha(\R^N)}^q.
\end{split}
\end{equation}
Since $\displaystyle\lim_{t\rightarrow +\infty} h(t)=-\infty$ and $h(t)>0$ when $t$ is closed to $0$, then $\displaystyle\max_{t>0}h(t)$ is attained for $t_\va>0$.  From the fact $\dis\int_{\R^N}{(v_\va(x))^{{2^*}}}dx=1$,
it follows\begin{equation}\label{3.11j}
\begin{aligned}
h'(t_\va)&=t_\va(\|(-\Delta)^{\alpha/2}v_\va\|^2_{L^2(\R^N)}-\mu\int_{\R^N}\frac{v_\va^2}{|x|^{2\alpha}}dx+\int_{\R^N}a(x) (v_\va(x))^2 dx)\\&\ \ \ -{t_\va^{{2^*-1}}} -t_\va^{q-1}\int_{\R^N}k(x)(v_\va(x))^qdx=0.\end{aligned}\end{equation}
Since $k(x)>0$, from (\ref{jt3.19}) and (\ref{jt3.20}) for $\va$ sufficiently small,
we have \begin{equation}\label{3.12j} t_\va^{{2^*-2}}\leq {\|(-\Delta)^\alpha v_\va\|^2_{L^2(\R^N)}-\mu\int_{\R^N}\frac{v_\va^2}{|x|^{2\alpha}}dx+\int_{\R^N} a(x)(v_\va(x))^{2} dx}<2S_{\alpha,\mu}.\end{equation}
Then \begin{equation}\label{3.10}
\begin{aligned}
&\|(-\Delta)^{\alpha/2}v_\va\|^2_{L^2(\R^N)}-\mu\int_{\R^N}\frac{v_\va^2}{|x|^{2\alpha}}dx+\int_{\R^N}a(x) (v_\va(x))^2 dx\\&= t_\va^{2^*-2}+t_\va^{q-2}\int_{\R^N}k(x)(v_\va(x))^qdx\\&
\leq t_\va^{2^*-2}+(2S_{\alpha,\mu})^{\frac{q-2}{2^*-2}}
\int_{\R^N}k(x)(v_\va(x))^qdx.
\end{aligned}
\end{equation}
Choosing $\va>0$ small enough, by (\ref{jt3.19})-(\ref{jt3.21}), there exists a constant $\alpha_1>0$ such that $t_\va>\alpha_1>0$.
 Combining this with (\ref{3.12j}), it implies that $t_\va$ is bounded for $\va>0$ small enough.

 Hence, for
$\va>0$  small,
\begin{align*}
 \max_{t>0}I(tv_\va)&=I (t_\va v_\va)\\&\leq
 \max_{t>0}\Bigl\{\frac{t^2}{2}\int_{\R^N}(|(-\Delta)^{\alpha/2}
v_\va(x)|^2-\mu\frac{v_\va^2}{|x|^{2\alpha}})dx-\frac{t^{{2^*}}}{{2^*}}\int_{\R^N}
(v_\va(x))^{{2^*}}dx\Bigl\}\\&\ \ \  -O(\va^{\frac{(2\alpha-N)q}{2}+N}) +
O(\va^{2\alpha}), \\
&<\frac{\alpha}{N}S_{\alpha,\mu}^{\frac{N}{2\alpha}}\text{ \,\,( by (\ref{l}) )}.
\end{align*}
This completes the proof of (\ref{4.29}). By the definition of
$c^*$, we have $c^*<\frac{\alpha}{N}S_{\alpha,\mu}^{\frac{N}{2\alpha}}$.

Next we verify
\begin{equation}\label{4.30}
c^*<J^\infty.
\end{equation}
Let $\{u_0\}$ be the minimizer of
$J^\infty,\,I^\infty(u_0)=J^\infty$ and
\[\int_{\R^N}\Bigl(|(-\Delta)^{\alpha/2} u_0(x)|^2+\bar a
|u_0(x)|^2
\Bigl)dx=\int_{\R^N}\bar
k(u_0^+(x))^q dx+\int_{\R^N}(u_0^+(x))^{2^*} dx.\]
 Let
$$\begin{aligned}g(t)&=I^\infty(tu_0)=\frac{1}{2}t^2\int_{\R^N}\Bigl(|(-\Delta)^{\alpha/2} u_0(x)|^2+\bar a
|u_0(x)|^2
\Bigl)dx\\&\ \ \ -\frac{t^q}{q}\int_{\R^N}\bar
k(u_0^+(x))^q dx-\frac{t^{2^*}}{2^*}\int_{\R^N}(u_0^+(x))^{2^*} dx,\end{aligned}$$
$$
g'(t)=t\int_{\R^N}\Bigl(|(-\Delta)^{\alpha/2} u_0(x)|^2+\bar a
|u_0(x)|^2
\Bigl)dx-t^{q-1}\int_{\R^N}\bar
k(u_0^+(x))^q dx-t^{2^*-1}\int_{\R^N}(u_0^+(x))^{2^*}dx.$$ Thus $g'(t)\geq0$ if $t\in(0,1)$; $g'(t)\leq 0$ if
$t\geq 1$. Then
\begin{equation}\label{4.31}
\begin{aligned}
g(1)&=I^\infty(u_0)=\max_{l}I^\infty (u);\\
\text{ where }&l=\{tu_0,t\geq 0, u_0\text{ fixed }\}.
\end{aligned}
\end{equation}
Since there exists a $t_0>0$ such that $\displaystyle \sup_{t\geq
0}I(tu_0)=I(t_0u_0)$, from (\ref{4.31}) and the assumptions of
$a(x) \text{ and } k(x)$, we have
$$\displaystyle J^\infty=I^\infty (u_0)\geq
I^\infty (t_0u_0)>I(t_0 u_0)=\sup_{t\geq 0}I(t u_0).$$ It
  proves (\ref{4.30}). By (\ref{4.29}) and (\ref{4.30}) we have
(\ref{4.28}). Then the proof is completed.

\section{Appendix}
In this appendix, we give some lemmas and detailed proofs for the convenience of the reader.
\begin{lemma}\label{3.1}(Lemma 2.1, \cite{ZC}) 
	Let $\{\rho_n\}_{n\geq 1}$ be a sequence in $L^1(\R^N)$ satisfying
	\begin{equation}\label{(2.4)}\rho_n\geq 0\,\,\, \text{on }
	\R^N,\,\,\,\lim_{n\rightarrow\infty}\int_{\R^N}\rho_n(x)dx=\la>0,
	\end{equation} where $\la>0$ is fixed. Then there exists a
	subsequence $\{\rho_{n_k}\}$ satisfying one of the  following two
	possibilities:
	
	(1) (Vanishing):
	\begin{equation}\label{2.5}
	\lim_{k\rightarrow\infty} \sup_{y\in\R^N}\int_{B(y,R)}\rho_{n_k}(x)
	dx=0,\,\,\,\, \text{for all } R<+\infty.
	\end{equation}
	
	(ii) (Nonvanishing): $\exists \alpha>0, R<+\infty$ and
	$\{y_k\}\subset \R^N$ such that $$\displaystyle\liminf_{k\rightarrow
		+\infty}\int_{y_k+B_R}\rho_{n_k}(x)dx\geq \alpha >0.$$
\end{lemma}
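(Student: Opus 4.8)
The plan is to reduce the statement to a clean dichotomy governed by the \emph{concentration function}. First I would set, for each $n$ and each $R>0$,
$$Q_n(R)=\sup_{y\in\R^N}\int_{B(y,R)}\rho_n(x)\,dx,$$
and record its two elementary properties: $Q_n(\cdot)$ is nondecreasing in $R$, and $0\le Q_n(R)\le \int_{\R^N}\rho_n\,dx$, where the right-hand side is bounded since $\int_{\R^N}\rho_n\to\la$. These bounds let me introduce the key quantity
$$\mu:=\lim_{R\to\infty}\Big(\limsup_{n\to\infty}Q_n(R)\Big),$$
whose existence is guaranteed because $R\mapsto\limsup_{n}Q_n(R)$ is nonnegative, nondecreasing, and bounded above by $\la$; in particular $\mu\in[0,\la]$.

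The entire argument then splits according to whether $\mu=0$ or $\mu>0$. If $\mu=0$, I would invoke monotonicity: a nonnegative nondecreasing function of $R$ that tends to $0$ as $R\to\infty$ must vanish identically, so $\limsup_n Q_n(R)=0$, hence $\lim_n Q_n(R)=0$, for every fixed $R<\infty$. This is exactly the vanishing alternative (1), with the subsequence taken to be the full sequence. If instead $\mu>0$, I would fix $R$ large enough that $\limsup_n Q_n(R)\ge \mu/2=:2\alpha>0$, extract a subsequence $\{n_k\}$ along which $Q_{n_k}(R)\to\ell\ge 2\alpha$, and then, using the definition of $Q_{n_k}(R)$ as a supremum, choose centers $y_k\in\R^N$ with $\int_{B(y_k,R)}\rho_{n_k}\ge Q_{n_k}(R)-1/k$. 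For $k$ large this yields $\int_{B(y_k,R)}\rho_{n_k}\ge\alpha$, so $\liminf_k\int_{B(y_k,R)}\rho_{n_k}\ge\alpha>0$, which is the nonvanishing alternative (ii).

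I do not expect a serious obstacle. Unlike the full Lions concentration--compactness principle, this reduced two--case version requires \emph{no} mass--splitting (dichotomy) construction, so the only points demanding care are the legitimacy of the iterated limit defining $\mu$, which is handled by the monotonicity of $Q_n$ in $R$, and the standard $1/k$ approximation that converts the supremum $Q_{n_k}(R)$ into explicit centers $y_k$. As a consistency check one notes the two alternatives are mutually exclusive for the chosen subsequence: nonvanishing at radius $R$ forces $\liminf_k Q_{n_k}(R)\ge\alpha>0$, contradicting vanishing at that same $R$. The remaining subsequence bookkeeping is routine.
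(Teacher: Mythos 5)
Your argument is correct. Note that the paper itself offers no proof of this lemma: it is quoted verbatim as Lemma 2.1 of the cited reference (Zhu--Cao), so there is no internal proof to compare against. Your concentration-function argument is the standard one for this reduced, two-alternative form of the concentration--compactness lemma: setting $Q_n(R)=\sup_{y\in\R^N}\int_{B(y,R)}\rho_n(x)\,dx$ and splitting on $\mu=\lim_{R\to\infty}\limsup_n Q_n(R)$ is exactly how the dichotomy ``vanishing versus nonvanishing'' is obtained in the literature, and your observation that the case $\mu=0$ needs no subsequence extraction (monotonicity in $R$ forces $\limsup_n Q_n(R)=0$ for every fixed $R$ simultaneously) is the clean way to avoid the diagonal argument that a naive treatment would require. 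The $1/k$-approximation of the supremum to produce the centers $y_k$ in the case $\mu>0$ is also sound. Two cosmetic cautions: your local symbols $\mu$ and $\alpha$ collide with the paper's global use of $\mu$ for the Hardy coefficient and $\alpha$ for the fractional order (the lemma statement itself already recycles $\alpha$, so this is inherited, but rename your $\mu$ if this proof were to be inserted into the text); and strictly speaking the two alternatives are not mutually exclusive as stated across different subsequences --- only, as you correctly note, along the same subsequence and the same radius $R$ --- which is all the lemma asserts.
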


\begin{lemma}\label{3.2}(Lemma 2.2, \cite{PTJ})
	If $\{u_n\}$ is bounded in $H^\alpha(\R^N) $ and for some $R>0$, we have
	$$\sup_{y\in\R^N}\int_{B(y,R)}|u_n(x)|^2 dx\rightarrow 0
	\,\,\text{as}\,\,n\rightarrow \infty.$$
	Then $u_n\rightarrow 0$ in $L^q (\R^N)$, for $2<q<\frac{2N}{N-2\alpha}$.
\end{lemma}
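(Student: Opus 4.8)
The plan is to prove this by the classical Lions vanishing argument, adapted to the fractional seminorm, assuming throughout that $\sup_{n}\|u_n\|_{H^\alpha(\R^N)}\le C$. First I would fix $q$ with $2<q<2^*$ and cover $\R^N$ by a countable family of balls $\{B(\xi_i,R)\}_i$ of the fixed radius $R$, with centers on a scaled lattice, chosen so that every point of $\R^N$ lies in at most a fixed number $K=K(N)$ of them. On each such ball, the fractional Sobolev embedding on a bounded Lipschitz domain (see \cite{NPV}) yields $\|u_n\|_{L^{2^*}(B_i)}^2\le C\big(\|u_n\|_{L^2(B_i)}^2+[u_n]_{\alpha,B_i}^2\big)$, where $[u_n]_{\alpha,B_i}^2=\int_{B_i}\int_{B_i}\frac{|u_n(x)-u_n(y)|^2}{|x-y|^{N+2\alpha}}\,dx\,dy$ is the local Gagliardo seminorm.

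Next I would interpolate $L^q$ between $L^2$ and $L^{2^*}$ on each ball, $\|u_n\|_{L^q(B_i)}\le \|u_n\|_{L^2(B_i)}^{1-\theta}\|u_n\|_{L^{2^*}(B_i)}^{\theta}$ with $\tfrac1q=\tfrac{1-\theta}2+\tfrac{\theta}{2^*}$, so that after raising to the power $q$ a uniformly small factor is produced with a positive exponent while the surviving $L^{2^*}$ factor stays summable in $i$. The summation rests on the finite-overlap property: since each point $x$ (resp.\ each pair $(x,y)$) lies in at most $K$ of the balls $B_i$ (resp.\ of the product sets $B_i\times B_i$), one gets $\sum_i\|u_n\|_{L^2(B_i)}^2\le K\|u_n\|_{L^2(\R^N)}^2$ and $\sum_i[u_n]_{\alpha,B_i}^2\le K\,[u_n]_{\dot H^\alpha(\R^N)}^2$, hence $\sum_i\|u_n\|_{L^{2^*}(B_i)}^2\le C\|u_n\|_{H^\alpha(\R^N)}^2\le C$ uniformly in $n$. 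Setting $\epsilon_n:=\sup_{y}\int_{B(y,R)}|u_n|^2\,dx\to0$, the goal is to reach a bound of the form $\|u_n\|_{L^q(\R^N)}^q\le C\,\epsilon_n^{\gamma}$ for some $\gamma>0$, which then gives $u_n\to0$ in $L^q(\R^N)$.

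I expect the main obstacle to be making the $i$-summation go through for \emph{all} $q\in(2,2^*)$, and in particular for $q$ close to $2$. Writing $a_i:=\|u_n\|_{L^2(B_i)}^2$ and $c_i:=\|u_n\|_{L^{2^*}(B_i)}^2$, the raised interpolation bound is $\sum_i a_i^{(1-\theta)q/2}c_i^{\theta q/2}$. If I extract all of the local $L^2$-smallness via $a_i\le\epsilon_n$, the surviving power $\theta q/2$ of $c_i$ must be at least $1$ in order to use $\sum_i c_i^{s}\le(\sum_i c_i)^{s}$ for $s\ge1$; this works only when $\theta q\ge2$, i.e.\ $q\ge 2+\tfrac{4\alpha}{N}$. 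For the complementary low range $\theta q<2$ I would first upgrade the local inequality on the finite-measure ball, using the uniform Hölder bound $\|u_n\|_{L^2(B_i)}\le C_R\|u_n\|_{L^{2^*}(B_i)}$ (all balls share the radius $R$), to the form $\int_{B_i}|u_n|^q\,dx\le C\big(\int_{B_i}|u_n|^2\big)^{(q-2)/2}\big(\int_{B_i}|u_n|^{2^*}\big)^{2/2^*}$, which is valid precisely when $\theta q\le2$ and makes the $c_i$-power exactly equal to $1$.

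In either regime the conclusion follows by summing: in the low range, $\|u_n\|_{L^q(\R^N)}^q\le C\,\epsilon_n^{(q-2)/2}\sum_i c_i\le C\,\epsilon_n^{(q-2)/2}\to0$, and in the high range the analogous bound with exponent $(1-\theta)q/2>0$ on $\epsilon_n$ applies. Thus $\|u_n\|_{L^q(\R^N)}\to0$ for every $q\in(2,2^*)$, completing the proof. The only genuinely delicate book-keeping is the balancing of exponents around the threshold $\theta q=2$, together with verifying the finite-overlap estimate for the nonlocal seminorm; everything else is routine interpolation and Sobolev embedding.
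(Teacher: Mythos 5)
Your proof is correct. Note that the paper itself gives no argument for this lemma at all: it is quoted verbatim from Lemma 2.2 of the cited reference (Felmer--Quaas--Tan), so there is no internal proof to compare against; what you have supplied is a self-contained proof of the standard fractional vanishing lemma, and all the key technical points check out --- the finite-overlap estimate does pass to the nonlocal seminorm (a pair $(x,y)\in B_i\times B_i$ is counted at most $K$ times, so $\sum_i[u_n]_{\alpha,B_i}^2\le K[u_n]_{\dot H^\alpha(\R^N)}^2$), the local embedding constant is uniform because all balls share the radius $R$, and your low-range inequality $\int_{B_i}|u_n|^q\le C\bigl(\int_{B_i}|u_n|^2\bigr)^{(q-2)/2}\bigl(\int_{B_i}|u_n|^{2^*}\bigr)^{2/2^*}$ is exactly what Hölder on the fixed-radius ball gives when $\theta q\le 2$. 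The only remark worth making is that your two-regime bookkeeping around the threshold $\theta q=2$ (i.e.\ $q=2+\tfrac{4\alpha}{N}$) can be avoided: the classical route, which is the one underlying the cited lemma, runs the ball-covering argument only at the single exponent $q_0=2+\tfrac{4\alpha}{N}$ where the exponent on the $L^{2^*}$ factor is exactly $1$, and then recovers every other $q\in(2,2^*)$ by one global interpolation, $\|u_n\|_{L^q}\le\|u_n\|_{L^2}^{1-\lambda}\|u_n\|_{L^{q_0}}^{\lambda}$ for $q<q_0$ and $\|u_n\|_{L^q}\le\|u_n\|_{L^{q_0}}^{1-\lambda}\|u_n\|_{L^{2^*}}^{\lambda}$ for $q>q_0$, using boundedness of $\{u_n\}$ in $L^2$ and in $L^{2^*}$ (the latter via the global Sobolev embedding). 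Your version buys a direct ball-wise bound $\|u_n\|_{L^q(\R^N)}^q\le C\epsilon_n^{\gamma}$ valid for each $q$ separately, at the cost of the extra case analysis; both are sound.
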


\begin{lemma}\label{3.5}
	Let $\{u_n\}$ be a Palais-Smale  sequence of $I$ at level
	$d\in\mathbb{R}$. Then $d\geq 0$ and $\{u_n\}$ $\subset$ $H^{\alpha}
	(\R^N)$ is bounded. Moreover,
	every Palais-Smale sequence for $I$ at a level zero converges
	strongly to zero.
\end{lemma}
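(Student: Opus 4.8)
The plan is to run the standard argument that extracts boundedness and sign information from a Palais--Smale sequence by combining $I$ with its derivative so as to cancel the subcritical term. First I would abbreviate the quadratic part by
$$Q(u)=\int_{\R^N}\Bigl(|(-\Delta)^{\alpha/2}u|^2-\mu\frac{u^2}{|x|^{2\alpha}}+a(x)|u|^2\Bigr)dx,$$
so that $I(u)=\frac{1}{2}Q(u)-\frac{1}{2^*}\int_{\R^N}(u^+)^{2^*}dx-\frac{1}{q}\int_{\R^N}k(x)(u^+)^q dx$ and $\langle I'(u),u\rangle=Q(u)-\int_{\R^N}(u^+)^{2^*}dx-\int_{\R^N}k(x)(u^+)^q dx$. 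The crucial identity is
$$I(u)-\frac{1}{q}\langle I'(u),u\rangle=\Bigl(\frac{1}{2}-\frac{1}{q}\Bigr)Q(u)+\Bigl(\frac{1}{q}-\frac{1}{2^*}\Bigr)\int_{\R^N}(u^+)^{2^*}dx,$$
in which the $k$-term has disappeared and, since $2<q<2^*$, both coefficients on the right are strictly positive.

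The key analytic ingredient is the coercivity of $Q$ on $H^\alpha(\R^N)$. By the fractional Hardy inequality (\ref{1.4j}) one has $\int_{\R^N}\frac{u^2}{|x|^{2\alpha}}dx\le\Gamma_{N,\alpha}^{-1}\|u\|_{\dot{H}^\alpha(\R^N)}^2$. Since $\phi_{\alpha,N}$ is strictly increasing and a direct computation from (\ref{1.20jjj}) gives $\phi_{\alpha,N}(\frac{N-2\alpha}{2})=\Gamma_{N,\alpha}$, the standing hypothesis $0<\mu<\phi_{\alpha,N}(\frac{N-4\alpha}{2})$ yields $\mu<\Gamma_{N,\alpha}$, whence $\|u\|_{\dot{H}^\alpha}^2-\mu\int_{\R^N}\frac{u^2}{|x|^{2\alpha}}dx\ge(1-\mu\Gamma_{N,\alpha}^{-1})\|u\|_{\dot{H}^\alpha}^2$ with a positive factor. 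Combining this with $a(x)\ge\hat a>0$ from assumption (b), I would produce a constant $c_1>0$ such that $Q(u)\ge c_1\|u\|_{H^\alpha(\R^N)}^2$ for all $u\in H^\alpha(\R^N)$.

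With coercivity in hand the three assertions follow quickly. For boundedness, the left side of the crucial identity equals $d+o(1)+o(1)\|u_n\|_{H^\alpha(\R^N)}$ (using $I(u_n)\to d$ and $\|I'(u_n)\|_{(H^\alpha)^*}\|u_n\|_{H^\alpha}=o(1)\|u_n\|_{H^\alpha}$), while the right side is at least $(\frac{1}{2}-\frac{1}{q})c_1\|u_n\|_{H^\alpha(\R^N)}^2$; a quadratic-versus-linear comparison then bounds $\{u_n\}$. Once $\{u_n\}$ is bounded, $\langle I'(u_n),u_n\rangle\to0$, so $d=\lim_n\big[(\frac{1}{2}-\frac{1}{q})Q(u_n)+(\frac{1}{q}-\frac{1}{2^*})\int_{\R^N}(u_n^+)^{2^*}dx\big]\ge0$ because both terms are nonnegative. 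Finally, if $d=0$, each of these nonnegative terms tends to $0$; in particular $Q(u_n)\to0$, and coercivity forces $\|u_n\|_{H^\alpha(\R^N)}\to0$, i.e. strong convergence to zero.

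I expect the only genuine obstacle to be the coercivity step, namely certifying that the structural bound $\mu<\phi_{\alpha,N}(\frac{N-4\alpha}{2})$ places $\mu$ strictly below the sharp Hardy constant $\Gamma_{N,\alpha}$ via the monotonicity of $\phi_{\alpha,N}$ and the endpoint identity $\phi_{\alpha,N}(\frac{N-2\alpha}{2})=\Gamma_{N,\alpha}$; the remaining estimates are routine, needing only $a\ge\hat a>0$ and the elementary positivity of the two leftover terms.
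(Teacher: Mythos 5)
Your proposal is correct and follows essentially the same route as the paper: both hinge on the identity $I(u_n)-\frac{1}{q}\langle I'(u_n),u_n\rangle=\bigl(\frac{1}{2}-\frac{1}{q}\bigr)Q(u_n)+\bigl(\frac{1}{q}-\frac{1}{2^*}\bigr)\int_{\R^N}(u_n^+)^{2^*}dx$, coercivity of the quadratic form $Q$ on $H^\alpha(\R^N)$, and a quadratic-versus-linear comparison to get boundedness, then $d\ge 0$ and strong convergence to zero when $d=0$. The only difference is one of completeness: you explicitly certify the coercivity constant by checking $\varphi_{\alpha,N}\bigl(\frac{N-2\alpha}{2}\bigr)=\Gamma_{N,\alpha}$ and invoking monotonicity to get $\mu<\Gamma_{N,\alpha}$, a step the paper asserts implicitly (its displayed coercivity inequality even omits the Hardy term), so your write-up fills in a detail rather than taking a different path.
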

\begin{proof}
	Since $a(x)\geq 0$, $\bar a>0$ and
	$\dis\inf_{x\in \R^N} a(x)=\hat{a}>0$, we have
	$$\|u_n\|_{\dot{H}^\alpha (\R^N)}^2+\int_{\R^N}
	a(x)|u_n(x)|^2dx\geq
	c\|u_n\|_{H^{\alpha}(\R^N)}^2,$$ and hence
	for $2<q<2^*$
	\begin{eqnarray}\label{l3.1}
	\begin{split}
	d+1+o(\|u_n\|) \geq &
	I(u_n)-\frac{1}{q}\langle I'(u_n), u_n\rangle\\
	=&(\frac{1}{2}-\frac{1}{q})\int_{\R^N}\bigl (|(-\Delta)^{\alpha/2}u_n(x)|^2-\mu\frac{|u_n(x)|^{2}}{|x|^{2\alpha}}+a(x)|u_n(x)|^2\bigl )dx \\&+(\frac{1}{q}-\frac{1}{{2^*}})\int_{\R^N}(u_n^+(x))^{{2^*}} dx\\
	\geq & C\|u_n\|_{H^{\alpha}(\R^N)}^2.
	\end{split}
	\end{eqnarray}
	It follows  that $\{u_n\}$ is bounded in $H^{\alpha}(\R^N)$ for $2<q<2^*$. Since
	$$d=\lim_{n\rightarrow
		\infty}I(u_n)-\frac{1}{q}\langle I'(u_n),u_n\rangle\geq
	C\limsup_{n\rightarrow \infty}\|u_n\|_{H^{\alpha}(\R^N)}^2,$$ we have
	$d\geq 0$. Suppose now that $d=0$, we obtain from the above inequality
	that
	$$\lim_{n\rightarrow \infty}\|u_n\|_{H^{\alpha}(\R^N)} =0.$$
\end{proof}

\begin{lemma}\label{l4.5}
Let $\{u_n\}$ be a  Palais-Smale sequence of  $I$ at level
$d\in\mathbb{R}$ and $u^+_n=\max \{u_n,0\}$.
 Then $\{u_n^+\}$ is also a Palais-Smale sequence of $I$ at level $d$.

\end{lemma}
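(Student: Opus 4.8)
The plan is to exploit the defining structural feature of $I$: the superlinear terms depend only on $u^+$. Consequently, testing the derivative $I'(u_n)$ against the negative part $u_n^-:=u_n^+-u_n\ge 0$ annihilates every critical and subcritical contribution and leaves only a coercive quadratic form, which will force $u_n^-\to 0$ strongly in $H^\alpha(\R^N)$. Once that is established, writing $u_n^+=u_n+u_n^-$ perturbs both $I$ and $I'$ by quantities controlled by $\|u_n^-\|_{H^\alpha(\R^N)}$, and the conclusion follows.

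First I would record the nonlocal sign inequality underlying everything. For any $u$, decomposing $u=u^+-u^-$ with $u^+,u^-\ge 0$ and $u^+u^-=0$ a.e., the cross term satisfies $(u^+(x)-u^+(y))(u^-(x)-u^-(y))=-u^+(x)u^-(y)-u^+(y)u^-(x)\le 0$, whence pointwise $(u(x)-u(y))(u^-(x)-u^-(y))\le-(u^-(x)-u^-(y))^2$. Integrating against the kernel $c_{N,\alpha}|x-y|^{-N-2\alpha}$ gives
\[
c_{N,\alpha}\int_{\R^N}\int_{\R^N}\frac{(u_n(x)-u_n(y))(u_n^-(x)-u_n^-(y))}{|x-y|^{N+2\alpha}}\,dxdy\le -\|u_n^-\|_{\dot H^\alpha(\R^N)}^2 .
\]
Since $u_n^+u_n^-=0$ kills the terms $(u_n^+)^{2^*-1}u_n^-$ and $k(x)(u_n^+)^{q-1}u_n^-$, and $u_nu_n^-=-(u_n^-)^2$, this yields
\[
\langle I'(u_n),u_n^-\rangle\le -\|u_n^-\|_{\dot H^\alpha(\R^N)}^2+\mu\int_{\R^N}\frac{(u_n^-(x))^2}{|x|^{2\alpha}}\,dx-\int_{\R^N}a(x)(u_n^-(x))^2\,dx .
\]
Under the standing hypothesis $0<\mu<\phi_{\alpha,N}(\frac{N-4\alpha}{2})$ (in particular $\mu$ below the best Hardy constant $\Gamma_{N,\alpha}$, by monotonicity of $\phi_{\alpha,N}$ and $\frac{N-4\alpha}{2}<\frac{N-2\alpha}{2}$), the Hardy inequality (\ref{1.4j}) makes the form $\|v\|_{\dot H^\alpha}^2-\mu\int\frac{v^2}{|x|^{2\alpha}}$ positive and equivalent to $\|v\|_{\dot H^\alpha}^2$, while $a(x)\ge\hat a>0$ dominates the $L^2$ part; hence the right-hand side is $\le -c\|u_n^-\|_{H^\alpha(\R^N)}^2$ for some $c>0$. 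On the other hand, by Lemma \ref{3.5} the sequence $\{u_n\}$ is bounded, and the quoted inequality $\|u^+\|_{\dot H^\alpha}\le\|u\|_{\dot H^\alpha}$ applied to $-u_n$ (together with $\|u_n^-\|_{L^2}\le\|u_n\|_{L^2}$) shows $\{u_n^-\}$ is bounded in $H^\alpha(\R^N)$; thus $|\langle I'(u_n),u_n^-\rangle|\le\|I'(u_n)\|_{H^{-\alpha}}\|u_n^-\|_{H^\alpha}=o(1)$. Combining the two estimates gives $\|u_n^-\|_{H^\alpha(\R^N)}\to 0$.

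Finally I would transfer the Palais--Smale property from $u_n$ to $u_n^+=u_n+u_n^-$. Because the nonlinear terms of $I$ are literally the same for $u_n$ and for $u_n^+$, the differences reduce to the quadratic part evaluated on $u_n^-$: using $(u_n^+)^2-u_n^2=-(u_n^-)^2$ and bilinearity of the Gagliardo form one finds that $\langle I'(u_n^+)-I'(u_n),\phi\rangle$ equals $c_{N,\alpha}\iint\frac{(u_n^-(x)-u_n^-(y))(\phi(x)-\phi(y))}{|x-y|^{N+2\alpha}}\,dxdy-\mu\int\frac{u_n^-\phi}{|x|^{2\alpha}}+\int a(x)u_n^-\phi$, and each summand is bounded by $C\|u_n^-\|_{H^\alpha}\|\phi\|_{H^\alpha}$ via Cauchy--Schwarz, the Hardy inequality, and boundedness of $a$ (which is bounded since it is continuous with a finite limit at infinity). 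Taking the supremum over $\|\phi\|_{H^\alpha}\le 1$ gives $\|I'(u_n^+)-I'(u_n)\|_{H^{-\alpha}}\le C\|u_n^-\|_{H^\alpha}\to 0$, so $I'(u_n^+)\to 0$; an identical estimate on the energy gives $I(u_n^+)-I(u_n)\to 0$, so $I(u_n^+)\to d$. Hence $\{u_n^+\}$ is a Palais--Smale sequence at level $d$.

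I expect the main obstacle to be the two genuinely fractional points above: unlike the local case, $\langle(-\Delta)^\alpha u,u^-\rangle$ is not simply $-\|u^-\|_{\dot H^\alpha}^2$, so the favourable sign of the nonlocal cross term must be extracted carefully, and one must verify that the admissible range of $\mu$ keeps the Hardy-perturbed quadratic form coercive, since the whole argument rests on that coercivity.
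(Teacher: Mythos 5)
Your proof is correct and follows essentially the same route as the paper's: test $I'(u_n)$ against the negative part, use $u_n^+u_n^-=0$ to annihilate the critical and subcritical terms, extract the favourable sign of the nonlocal cross term, and invoke Hardy coercivity together with $a(x)\geq \hat a>0$ to force $\|u_n^-\|_{H^\alpha(\R^N)}\to 0$, after which the Palais--Smale property transfers to $u_n^+$. If anything, your write-up is slightly more careful than the paper's at two points it leaves implicit: the verification that $\mu<\phi_{\alpha,N}(\frac{N-4\alpha}{2})<\phi_{\alpha,N}(\frac{N-2\alpha}{2})=\Gamma_{N,\alpha}$, which is what makes the Hardy-perturbed quadratic form coercive, and the uniform operator-norm bound $\|I'(u_n^+)-I'(u_n)\|\leq C\|u_n^-\|_{H^\alpha}$ replacing the paper's bare assertion that $I'(u_n^+,\phi)=I'(u_n,\phi)\to 0$.
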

\begin{proof} By the definition of $I$ we have that as $n\rightarrow \infty$
	$$\begin{aligned}I(u_n)=&\frac{1}{2}\int_{\R^N}\Bigl (|(-\Delta)^{\alpha/2}u_n(x)|^2-\mu\frac{|u_n(x)|^{2}}{|x|^{2\alpha}}
	+a(x)|u_n(x)|^2\Bigl )
	dx\\&-\dis\frac{1}{2^*}\int_{\R^N}{(u_n^+(x))^{2^*}}dx-\frac{1}{q}\int_{\R^N}k(x)(u_n^+(x))^q
	dx\rightarrow d,\end{aligned}$$
	and
	\begin{equation}
	\begin{aligned}
	&\ \ <I'(u_n),\phi>\\&=C_{N,\alpha}\int_{\R^N}\int_{\R^N}\frac{(u_n(x)-u_n(y))(\phi(x)-\phi(y))}{|x-y|^{N+2\alpha}}dxdy+\int_{\R^N}a(x)u_n(x)\phi(x) dx-\mu\int_{\R^N}\frac{u_n(x)\phi(x)}{|x|^{2\alpha}}dx\\&\ \ -\int_{\R^N}(u^+_n(x))^{2^*-1}\phi(x)dx-\int_{\R^N}k(x)(u_n^+(x))^{q-1}\phi(x)dx\rightarrow 0, \text{ for all }\phi \in H^\alpha(\R^N).
	\end{aligned}
	\end{equation}
	Taking $\phi=-u_n^-=\min \{u_n,0\}$, from the fact
	\begin{equation}\label{4.8}
u_n(x)=u_n^+(x)-u_n^-(x), \,\, u_n^+(x)u_n^-(x)=0,
	\end{equation}
	 we have
	\begin{equation}\label{4.9}
	\begin{aligned}
	o(1)&=<I'(u_n),-u_n^->\\&=-C_{N,\alpha}\int_{\R^N}\int_{\R^N}\frac{(u_n(x)-u_n(y))(u_n^-(x)-u_n^-(y))}{|x-y|^{N+2\alpha}}dxdy+\mu\int_{\R^N}\frac{u_n(x)u^-_n(x)}{|x|^{2\alpha}}dx\\&\ \ -\int_{\R^N}a(x)u_n(x)u_n^-(x) dx +\int_{\R^N}(u^+_n(x))^{2^*-1}u_n^-(x)dx+\int_{\R^N}k(x)(u_n^+(x))^{q-1}u_n^-(x)dx\\
	&=C_{N,\alpha}\int_{\R^N}\int_{\R^N}\frac{(u_n^-(x)-u_n^-(y))^2}{|x-y|^{N+2\alpha}}dxdy-\mu\int_{\R^N}\frac{(u^-_n(x))^2}{|x|^{2\alpha}}dx
	\\&\ \ \ +C_{N,\alpha}\int_{\R^N}\int_{\R^N}\frac{u_n^+(x)u_n^-(y)+u_n^+(y)u_n^-(x)}{|x-y|^{N+2\alpha}}dxdy +\int_{\R^N}a(x)(u_n^-(x))^2 dx \\&\geq c \|u_n^-\|_{H^\alpha}^2,
	\end{aligned}
	\end{equation}
	from (\ref{4.9}) and the fact $u^+_n(x)\geq 0,\, u^-_n(x)\geq 0,\, a(x)>0$, then
	\begin{equation}\label{4.10}
	\|u_n^-\|_{H^\alpha}\rightarrow 0,
	\end{equation}
	and
	\begin{equation}\label{4.11}
	\begin{aligned}
	\int_{\R^N}\int_{\R^N}\frac{2(u^+_n(x)-u^+_n(y))(u^-_n(x)-u^-_n(y))}{|x-y|^{N+2\alpha}}dxdy\rightarrow 0.
	\end{aligned}
	\end{equation}	Then
	from (\ref{4.8}) and (\ref{4.10})-(\ref{4.11}), we have
	\begin{equation}\label{4.12j}
	\begin{aligned}
&	 \int_{\R^N}\int_{\R^N}\frac{(u_n(x)-u_n(y))^2}{|x-y|^{N+2\alpha}}dxdy\\=&\int_{\R^N}\int_{\R^N}\frac{(u^+_n(x)-u^+_n(y))^2+(u^-_n(x)-u^-_n(y))^2-2(u^+_n(x)-u^+_n(y))(u^-_n(x)-u^-_n(y))}{|x-y|^{N+2\alpha}}dxdy\\=&\int_{\R^N}\int_{\R^N}\frac{(u_n^+(x)-u^+_n(y))^2}{|x-y|^{N+2\alpha}}dxdy+o(1).
	\end{aligned}
	\end{equation}
	That is
	\begin{equation}\label{4.13}
	\|u_n\|_{\dot{H}^\alpha}=\|u_n^+\|_{\dot{H}^\alpha}+o(1).
	\end{equation}
	Thus
	\[\lim_{n\rightarrow \infty}I(u_n^+)=\lim_{n\rightarrow \infty}I(u_n)=d\]
and \[I'(u_n^+,\phi)=I'(u_n,\phi)\rightarrow 0\]
as $n\rightarrow \infty$.  This complete the proof.
\end{proof}
\begin{lemma}\label{l4.6}
All nontrivial critical points of $I_\mu$  are the positive solutions of (\ref{1.4}).	
\end{lemma}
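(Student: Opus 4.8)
The plan is to reduce everything to showing that a nontrivial critical point $u$ of $I_\mu$ must be nonnegative. A critical point only satisfies, a priori, the weak equation with $(u^+)^{2^*-1}$ on the right-hand side; once we know $u\geq 0$ we have $u^+=u$, hence $(u^+)^{2^*-1}=|u|^{2^*-2}u$, and $u$ is a genuine (nonnegative, nontrivial) solution of the Euler equation attached to $I_\mu$, namely (\ref{1.12}) when $\mu>0$ and (\ref{1.4}) when $\mu=0$. So the heart of the matter is to prove $u^-\equiv 0$, and I would do this exactly as in the proof of Lemma \ref{l4.5}, by testing the identity $\langle I_\mu'(u),\phi\rangle=0$ against $\phi=-u^-\in\dot{H}^\alpha(\R^N)$ (admissible since $\|u^-\|_{\dot{H}^\alpha}\leq\|u\|_{\dot{H}^\alpha}$).

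Writing $u=u^+-u^-$ with $u^+u^-=0$ a.e., and using that the critical contribution $\int_{\R^N}(u^+)^{2^*-1}u^-\,dx=0$, the expansion of $\langle I_\mu'(u),-u^-\rangle=0$ yields
\[
0=\|u^-\|_{\dot{H}^\alpha(\R^N)}^2+c_{N,\alpha}\int_{\R^N}\int_{\R^N}\frac{u^+(x)u^-(y)+u^+(y)u^-(x)}{|x-y|^{N+2\alpha}}\,dx\,dy-\mu\int_{\R^N}\frac{(u^-(x))^2}{|x|^{2\alpha}}\,dx.
\]
Here the splitting of the Gagliardo seminorm is the same one carried out in (\ref{4.12j}), and the cross double integral is nonnegative because every factor is nonnegative. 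Discarding it gives
\[
0\geq \|u^-\|_{\dot{H}^\alpha(\R^N)}^2-\mu\int_{\R^N}\frac{(u^-(x))^2}{|x|^{2\alpha}}\,dx.
\]

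To finish, I would apply the fractional Hardy inequality (\ref{1.4j}) to $u^-$, which gives $\mu\int_{\R^N}(u^-)^2|x|^{-2\alpha}\,dx\leq (\mu/\Gamma_{N,\alpha})\|u^-\|_{\dot{H}^\alpha}^2$, whence
\[
0\geq\Bigl(1-\frac{\mu}{\Gamma_{N,\alpha}}\Bigr)\|u^-\|_{\dot{H}^\alpha(\R^N)}^2.
\]
The decisive point, and the step I expect to be the main obstacle, is to certify that this coefficient is strictly positive, i.e.\ that $\mu$ stays below the best Hardy constant $\Gamma_{N,\alpha}$. Evaluating (\ref{1.20jjj}) at $\alpha_\mu=\frac{N-2\alpha}{2}$ and comparing with (\ref{1.8j}) shows $\Gamma_{N,\alpha}=\phi_{\alpha,N}(\frac{N-2\alpha}{2})$; since $\phi_{\alpha,N}$ is strictly increasing and $\frac{N-4\alpha}{2}<\frac{N-2\alpha}{2}$, the hypothesis $\mu<\phi_{\alpha,N}(\frac{N-4\alpha}{2})$ forces $\mu<\Gamma_{N,\alpha}$. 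Consequently $\|u^-\|_{\dot{H}^\alpha}=0$, so $u^-\equiv 0$ and $u\geq 0$, $u\not\equiv 0$; a strong maximum principle for $(-\Delta)^\alpha$ then upgrades this to $u>0$, and $u$ solves the stated critical equation.
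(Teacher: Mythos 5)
Your proposal is correct and takes essentially the same route as the paper: the paper likewise kills $u^-$ by testing $I_\mu'(u)$ against $-u^-$ (it simply refers back to the computation \eqref{4.9}--\eqref{4.10} in Lemma~\ref{l4.5}) and then passes from $u\geq 0$ a.e.\ to $u>0$ everywhere via the fractional strong maximum principle (Silvestre's lower semicontinuity and minimum principle results). If anything, your write-up is more explicit on the one quantitative point the paper leaves implicit: absorbing the Hardy term requires $\mu<\Gamma_{N,\alpha}$, which you correctly certify through the identity $\Gamma_{N,\alpha}=\varphi_{\alpha,N}\bigl(\tfrac{N-2\alpha}{2}\bigr)$, the strict monotonicity of $\varphi_{\alpha,N}$, and the hypothesis $\mu<\varphi_{\alpha,N}\bigl(\tfrac{N-4\alpha}{2}\bigr)$.
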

\begin{proof}
	Let $u\not\equiv 0$ and $ u\in H^\alpha(\R^N)$ be a nontrivial critical point of $I_\mu$.
	First, arguing as in the proof of Lemma \ref{l4.5} (similar to (\ref{4.9}) and (\ref{4.10})), we can obtain that $\|u^-\|_{H^\alpha}=0$ which gives that
	\begin{equation}\label{4.14}
	u\geq 0 \text{ a.e. in } \R^N.
	\end{equation}
	Then  for any $x_0\in \R^N$,
	\begin{equation}\label{4.15j}
	(-\Delta)^\alpha u=\mu\frac{u}{|x|^{2\alpha}}+|u|^{2^*-2}u\geq 0,\text{ a.e. in } B(x_0,1), \,\int_{\R^N}\frac{|u(x)|}{1+|x|^{N+2\alpha}}dx\leq c\|u\|_{L^2}\leq c,
	\end{equation}
	 from Proposition 2.2.6	in \cite{SL}, we have
	 $u$ is lower semicontinous in $B(x_0,1)$. Combining this with (\ref{4.14}), it follows $u(x_0)\geq 0$. Then $u(x)\geq 0$ pointwise in $\R^N$.

	Next we claim that $u>0$ in $\R^N$. Otherwise there exist $x_1\in \R^N$ such that $u(x_1)=0$. Since 
	 $u$ is lower semicontinuous in $\overline {B(x_1,1/2)}$,
	 from Proposition 2.2.8 in \cite{SL}, it follows $u\equiv 0 \text{ in } \R^N$. This contradicts the assumption $u$ is nontrivial.
\end{proof}	
	
Let $\{u_n\}$ be a Palais-Smale sequence at level $d$. Up to a subsequence, we
assume that
$$u_n\rightharpoonup u \,\,\,\text{ in } H^{\alpha}(\R^N)\text{ as
}n\rightarrow \infty.$$ Obviously, we have $I'(u)=0$.
Let $v_n(x)=u_n(x)-u(x)$, as $n\rightarrow \infty$,
\begin{eqnarray}
v_n\rightharpoonup 0\text{ in
}H^{\alpha}(\R^N),\ \ \ \ \ \ \ \  \label{jl2.7}\\
v_n\rightarrow
0\text{ in
}L^{q}_{\mathrm{loc}}(\R^N)\text{ for all } 2< q<2^*,\\
v_n\rightarrow 0, \text{a.e. in } \R^N.\label{jl4.19}
\end{eqnarray}
As a consequence, we have the following Lemma.
\begin{lemma}\label{q}
	$\{v_n\}$ is a Palais-Smale  sequence for $I$ at level
	$d_0=d- I(u)$. \end{lemma}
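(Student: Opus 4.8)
The plan is to verify directly the two defining properties of a Palais--Smale sequence for $I$: that $I(v_n)\to d_0=d-I(u)$ and that $I'(v_n)\to 0$ in the dual of $H^\alpha(\R^N)$. Both reductions rest on decomposing each term of $I$ and of $\langle I'(\cdot),\phi\rangle$ into a piece carried by $v_n$, a piece carried by $u$, and a vanishing remainder, using weak convergence for the quadratic (linear) terms and a Br\'ezis--Lieb type splitting for the nonlinear terms. Throughout I would use the given facts $v_n\rightharpoonup 0$ in $H^\alpha(\R^N)$, $v_n\to 0$ a.e.\ and in $L^q_{\mathrm{loc}}(\R^N)$ for $2<q<2^*$, together with $I'(u)=0$ and the boundedness of $\{u_n\}$ from Lemma \ref{3.5}.

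For the energy level, I would treat the three quadratic contributions first. Writing $u_n=v_n+u$ and expanding, the cross terms in $\|u_n\|_{\dot{H}^\alpha}^2$ and in $\int a(x)|u_n|^2\,dx$ tend to zero because $v_n\rightharpoonup 0$ in $\dot{H}^\alpha(\R^N)$ and, since $a$ is bounded with $au\in L^2(\R^N)$, also $\int a(x)v_n u\,dx\to 0$. For the Hardy term the cross term $\int \frac{v_n u}{|x|^{2\alpha}}\,dx$ requires the observation that the map $w\mapsto w/|x|^\alpha$ is bounded from $\dot{H}^\alpha(\R^N)$ into $L^2(\R^N)$ by the Hardy inequality \eqref{1.4j}, hence weak-to-weak continuous; thus $v_n/|x|^\alpha\rightharpoonup 0$ in $L^2(\R^N)$ and the cross term vanishes. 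The critical term $\int (u_n^+)^{2^*}dx$ and the subcritical term $\int k(x)(u_n^+)^q dx$ split as $\int(v_n^+)^{2^*}+\int(u^+)^{2^*}+o(1)$ and likewise for the $q$-term, by applying the Br\'ezis--Lieb lemma to the continuous map $t\mapsto (t^+)^{p}$ with $f_n=u_n$, $f=u$, so that $f_n-f=v_n$; the bounded weight $k(x)$ is absorbed by a Vitali/uniform-integrability argument since the integrand converges to $0$ a.e. Collecting the pieces yields $I(u_n)=I(v_n)+I(u)+o(1)$, that is $I(v_n)\to d-I(u)=d_0$.

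For the derivative, I would fix $\phi\in H^\alpha(\R^N)$ with $\|\phi\|_{H^\alpha}\le 1$ and use that the gradient, Hardy and $a$-terms of $\langle I'(\cdot),\phi\rangle$ are linear in the argument, so they split exactly through $u_n=v_n+u$. Subtracting $\langle I'(u),\phi\rangle=0$ (since $I'(u)=0$) and inserting $\langle I'(u_n),\phi\rangle=o(1)\|\phi\|_{H^\alpha}$ (since $\{u_n\}$ is Palais--Smale), one is left with
\[
\langle I'(v_n),\phi\rangle=\int\big[(u_n^+)^{2^*-1}-(v_n^+)^{2^*-1}-(u^+)^{2^*-1}\big]\phi\,dx+\int k(x)\big[(u_n^+)^{q-1}-(v_n^+)^{q-1}-(u^+)^{q-1}\big]\phi\,dx+o(1)\|\phi\|_{H^\alpha}.
\]
It then remains to show the two bracketed nonlinear remainders tend to $0$ in the dual norms $L^{(2^*)'}(\R^N)$, with $(2^*)'=\frac{2N}{N+2\alpha}$, and $L^{q'}(\R^N)$ respectively; this is the dual (``gradient'') form of the Br\'ezis--Lieb lemma, which follows from a.e.\ convergence together with the elementary inequality $\big||a+b|^{p-2}(a+b)-|a|^{p-2}a\big|\le \varepsilon|a|^{p-1}+C_\varepsilon|b|^{p-1}$ (and its positive-part analogue) and the uniform bound on $\|u_n\|_{H^\alpha}$. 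This gives $\langle I'(v_n),\phi\rangle=o(1)$ uniformly in $\phi$, hence $I'(v_n)\to 0$.

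The main obstacle is the critical-exponent part of this last step: because $2^*$ is the critical Sobolev exponent the embedding is non-compact, so the convergence $(u_n^+)^{2^*-1}-(v_n^+)^{2^*-1}-(u^+)^{2^*-1}\to 0$ in $L^{(2^*)'}(\R^N)$ cannot be read off from compactness and must be argued from a.e.\ convergence and uniform integrability, and with uniformity in $\phi$ so as to control the full dual norm rather than a single test function. The Hardy cross-term estimate is a secondary delicate point, handled by the weak-to-weak continuity of $w\mapsto w/|x|^\alpha$ above.
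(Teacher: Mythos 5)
Your proposal is correct, and its skeleton matches the paper's: split the quadratic part of $I$ by weak convergence (cross terms vanish) and the nonlinear part by Br\'ezis--Lieb, then combine with $I'(u)=0$ and $\langle I'(u_n),\cdot\rangle\to 0$. The interesting divergence is in how the derivative condition $I'(v_n)\to 0$ is verified. The paper fixes a single test function $\phi\in C_0^\infty(B(0,r))$ and kills each $v_n$-nonlinearity \emph{locally}: the $k$-term via $v_n\to 0$ in $L^p_{\mathrm{loc}}$, and the Hardy term via the H\"older estimate with exponent $\bar q\in(\tfrac{N}{N-2\alpha},2^*)$ in \eqref{l5.17}; this yields $\langle I'(v_n),\phi\rangle\to 0$ for each fixed compactly supported $\phi$, with dual-norm convergence left to an implicit density-plus-boundedness argument. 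You instead subtract $\langle I'(u),\phi\rangle=0$ and insert $\langle I'(u_n),\phi\rangle=o(1)\|\phi\|_{H^\alpha}$, reducing everything to the Br\'ezis--Lieb \emph{defects} $(u_n^+)^{p-1}-(v_n^+)^{p-1}-(u^+)^{p-1}$ ($p=2^*,q$) measured in the dual Lebesgue norms $L^{(2^*)'}$, $L^{q'}$, controlled by the pointwise inequality $|j(a+b)-j(a)|\le\varepsilon|a|^{p-1}+C_\varepsilon|b|^{p-1}$ together with a.e.\ convergence and the uniform $H^\alpha$ bound. What your route buys is genuine uniformity over the unit ball of $H^\alpha$, i.e.\ operator-norm convergence $I'(v_n)\to 0$, which is what the Palais--Smale definition actually requires and which the paper's fixed-$\phi$ argument only delivers modulo the unstated density step; what the paper's route buys is brevity, since it reuses the local convergences \eqref{t2.12} already in hand. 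Two smaller differences: you apply Br\'ezis--Lieb directly to $t\mapsto(t^+)^p$, bypassing the paper's preliminary step $\|v_n^-\|_{\dot H^\alpha}\to 0$ (its analogue of \eqref{4.10}); and you treat the Hardy and $a(x)$ cross terms in the energy explicitly (the Hardy one via weak-to-weak continuity of $w\mapsto w/|x|^{\alpha}$ from $\dot H^\alpha$ to $L^2$, which is exactly the right tool), whereas the paper's displayed computation \eqref{4.24} records only the $\dot H^\alpha$ seminorm splitting and leaves those cross terms tacit.
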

\begin{proof}

	For $\phi(x)\in C^\infty_0(\R^N)$, there exists a $B(0,r)$ such that $\dis \mathrm{supp}\phi \subset B(0,r)$. Then$\text{ as } n\rightarrow \infty$,
	\begin{equation}
	\begin{aligned}\label{l5.16}
	|\int_{\R^N}k(x)(v_n^+(x))^{q-1} \phi(x) dx|\leq c
	|\int_{B(0,r)}(v_n^+(x))^{q-1} \phi(x)dx|=o(1),
	\end{aligned}
	\end{equation}
	\begin{equation}\label{l5.17}
	\begin{aligned}
	|\int_{\R^N }\frac{v_n^+(x) \phi(x)}{|x|^{2\alpha}}dx|
&	= |\int_{|x|\leq r }\frac{v_n^+(x)^{} \phi(x)}{|x|^{2\alpha}}dx|\\
&
	\leq c \int_{|x|\leq r }\frac{v_n^+(x)^{}}{|x|^{2\alpha}}dx
	\\&=(\int_{|x|\leq r }|v_n|^{\bar q}dx)^{\frac{1}{\bar q}}(\int_{|x|\leq r }\frac{1}{|x|^{2\alpha\frac{\bar q}{\bar q-1}}}dx)^{1-\frac{1}{\bar q}}=o(1).
	\end{aligned}
	\end{equation}
	where $\frac{N}{N-2\alpha}<\bar q<2^*$.
	
	By (\ref{jl2.7}), (\ref{l5.16}) and (\ref{l5.17}), we have $\langle \phi,
	I'(v_n)\rangle=o(1)\text{ as }n\rightarrow
	\infty$.  Then similar to (\ref{4.10}), we have  \begin{equation}
	\label{4.22j}\|v_n^-\|_{\dot{H}^\alpha}\rightarrow 0,\|u^-\|_{\dot{H}^\alpha}=0.
	\end{equation}
	By Sobolev inequality, (\ref{4.10}) and (\ref{4.22j}) it follows
	\[\|u_n\|_{L^q}=\|u_n^+\|_{L^q}+o(1),\,\|v_n\|_{L^q}=\|v_n^+\|_{L^q}+o(1), \|u\|_{L^q}=\|u^+\|_{L^q}.\]
	Then by the Br\'{e}zis-Lieb Lemma in \cite{BN}  as
	$n\rightarrow\infty$, we have
	\begin{equation}\label{4.22}\int_{\R^N}( v_n^+(x))^{q}dx=\int_{\R^N}
	(u_n^+(x))^{q}dx-\int_{\R^N}(u^+(x))^{q}dx+o(1)\,\, \text{for all }\  2\leq
	q\leq {2^*} . \end{equation}
	Similarly
	\begin{equation}\label{4.23}\int_{\R^N }\bigl (z_n^+(x)\bigl )^{2^*}dx=\int_{\R^N}
	\bigl(u^+_n(x)\bigl)^{2^*}dx-\int_{\R^N} (u^+(x))^{2^*}dx+o(1). \,
	\end{equation}
	\begin{eqnarray}\label{4.24}
	\begin{split}
	&\int_{\R^N}\int_{ \R^N}\frac{|u_{n}(x)-u_{n}(y)|^2}{|x-y|^{N+2\alpha}}dxdy\\=&	\int_{\R^N}\int_{ \R^N}\frac{|(v_{n}(x)+u(x))-(v_{n}(y)+u(y))|^2}{|x-y|^{N+2\alpha}}dxdy\\
	=&\int_{\R^N}\int_{ \R^N}\frac{|v_{n}(x)-v_{n}(y)|^2+|u(x)-u(y)|^2+2(v_n(x)-v_n(y))(u(x)-u(y))}{|x-y|^{N+2\alpha}}dxdy\\
	=&\int_{\R^N}\int_{ \R^N}\frac{|v_{n}(x)-v_{n}(y)|^2}{|x-y|^{N+2\alpha}}dxdy+\int_{\R^N}\int_{ \R^N}\frac{|u(x)-u(y)|^2}{|x-y|^{N+2\alpha}}dxdy+o(1).
	\end{split}
	\end{eqnarray}
	Hence from (\ref{4.22})-(\ref{4.24}), it follows
	$I(v_n)=I(u_n)-I(u)+o(1)=d-I(u)+o(1)$.
\end{proof}

\begin{lemma}\label{l4.8}Assume $t\geq b> 0\text{ and }q>1$, then
	 \[ t^q-(t-b)^q\geq b^q.\]
\end{lemma}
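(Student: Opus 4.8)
The plan is to prove the elementary inequality $t^q - (t-b)^q \geq b^q$ for $t \geq b > 0$ and $q > 1$ by reducing to a single-variable normalization and then invoking convexity (equivalently, the superadditivity of $s \mapsto s^q$). First I would set $s = t - b \geq 0$, so the claim becomes $(s+b)^q \geq s^q + b^q$ for $s \geq 0$, $b > 0$, $q > 1$; this is the standard superadditivity of the power function on the nonnegative reals. To establish it cleanly, I would divide through by $b^q > 0$ and write $r = s/b \geq 0$, reducing everything to the one-variable inequality $(1+r)^q \geq 1 + r^q$ for all $r \geq 0$.

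The single-variable statement I would handle by a monotonicity argument. Define
\[
g(r) = (1+r)^q - r^q - 1, \qquad r \geq 0.
\]
Then $g(0) = 0$, and differentiating gives $g'(r) = q\bigl((1+r)^{q-1} - r^{q-1}\bigr)$. Since $q > 1$ forces $q - 1 > 0$, the map $\tau \mapsto \tau^{q-1}$ is strictly increasing on $[0,\infty)$, so $(1+r)^{q-1} > r^{q-1}$ for every $r \geq 0$, whence $g'(r) > 0$ on $(0,\infty)$. Therefore $g$ is nondecreasing with $g(0) = 0$, giving $g(r) \geq 0$, i.e. $(1+r)^q \geq 1 + r^q$. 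Substituting back $r = s/b$ and multiplying by $b^q$ recovers $(s+b)^q \geq s^q + b^q$, and then $s = t-b$ yields exactly $t^q - (t-b)^q \geq b^q$.

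There is essentially no hard obstacle here; the only point requiring a little care is that the normalization by $b^q$ is legitimate precisely because $b > 0$ (strict), and that the reduction $s = t - b \geq 0$ uses the hypothesis $t \geq b$. An alternative, slightly slicker route I could take instead avoids calculus entirely: by the mean value theorem applied to $f(\tau) = \tau^q$ on $[t-b, t]$, there is $\xi \in (t-b, t)$ with $t^q - (t-b)^q = q\,\xi^{q-1} b \geq q\,(t-b)^{q-1} b$, but this does not directly give the clean bound $b^q$, so the convexity/monotonicity argument above is the more economical one and is the version I would write out.
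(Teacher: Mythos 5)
Your proof is correct and is essentially the same as the paper's: the paper defines $f(t)=t^q-(t-b)^q$, computes $f'(t)=q\bigl(t^{q-1}-(t-b)^{q-1}\bigr)>0$ for $t\geq b>0$ by monotonicity of $\tau\mapsto\tau^{q-1}$, and concludes $f(t)\geq f(b)=b^q$. Your normalization $s=t-b$, $r=s/b$ merely rescales the same auxiliary function, and the core derivative-plus-monotonicity argument is identical.
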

\begin{proof}
	Let $f(t)=  t^q-(t-b)^q$, it follows
	\[f'(t)=qt^{q-1}-q(t-b)^{q-1}>0\text{ for } t\geq b> 0,\,q>1.\]
	Then $f(t)= t^q-(t-b)^q\geq f(b)=b^q.$
\end{proof}

\bigskip

\noindent{\bf Acknowledgement} \ \ The research was supported by the Natural Science Foundation of China
	(11271141) and the China Scholarship Council (201508440330).

	\author{ Lingyu Jin \\\small{Department of Mathematics, South China Agricultural
			University,}
		\\\small{ Guangzhou 510642, P. R. China}
		\\ \small {Email: jinlingyu300@126.com}

\end{document}

